\documentclass[11pt]{article}

\usepackage{latexsym,color,amsmath,amsthm,amssymb,amscd,amsfonts}
\usepackage{graphicx}
\usepackage{hyperref}
\usepackage{cancel}

\newtheorem{conj}{Conjecture}[section]
\newtheorem{theorem}[conj]{Theorem}

\newtheorem{remark}[conj]{Remark}
\newtheorem{lemma}[conj]{Lemma}
\newtheorem{proposition}[conj]{Proposition}

\newtheorem{definition}[conj]{Definition}

\newtheorem{ex}[conj]{Examples}

\newcommand\independent{\protect\mathpalette{\protect\independent}{\perp}} 
\def\independent#1#2{\mathrel{\rlap{$#1#2$}\mkern2mu{#1#2}}}

\newcommand{\R}{\mathbb{R}}

\renewcommand{\P}{\mathbb{P}}

\newcommand{\E}{\mathbb{E}}

\DeclareMathOperator{\Var}{Var}

\newcommand{\Z}{\mathcal{Z}}

\renewcommand{\Z}{\mathbb{Z}}

\newcommand{\N}{\mathbb{N}}

\date{\vspace{-5ex}}

\author{Abdulmajeed Alqasem\thanks{Department of Mathematics, King Saud University, Riyadh 11451, Saudi Arabia, {\it aalqasem1@ksu.edu.sa}} 
, Heshan Aravinda\thanks{ Department of Mathematics and Statistics, University of Wyoming, 1000 E. University Ave, Laramie, WY 82071, USA, {\it hpathira@uwyo.edu}}, Arnaud Marsiglietti\thanks{ Department of Mathematics, University of Florida, Gainesville, FL 32611, USA, {\it a.marsiglietti@ufl.edu}}}

\title{Majorization, Entropy and Concentration Inequalities for Discrete $\alpha$-Concave Random Variables}

\begin{document}

\maketitle

\begin{abstract}\noindent
We establish a maximum-variance inequality for discrete $\alpha$-concave random variables in the regime $-1/3<\alpha<0$, extending corresponding results beyond the discrete log-concave setting. As applications, we obtain a reverse entropy power inequality and bounds for the L\'evy concentration function. We further establish polynomial-type concentration inequalities, reflecting the heavy-tailed nature of the class. A main tool in our approach is a convex majorization principle that reduces the relevant extremal problems to suitable $\alpha$-affine distributions.

\end{abstract}

\vskip5mm
\noindent
{\bf Keywords:} $\alpha$-concave, heavy-tailed, majorization, concentration inequalities, concentration function, entropy power inequality

\section{Introduction}

Let $\alpha \in [-\infty, \infty]$. For real numbers $a,b \geq 0$ and $\theta \in [0,1]$, define the {\it generalized means of order $\alpha$} as follows,
\[
M_\alpha^\theta(a,b):=
\begin{cases}
\big((1-\theta)a^\alpha+\theta b^\alpha\big)^{1/\alpha}, & \alpha \notin \{-\infty, 0 , \infty\}\\
\min\{a,b\}, &\alpha=-\infty\\
a^{1-\theta}b^\theta, & \alpha=0 \\
\max\{a,b\}, & \alpha = \infty
\end{cases}.
\]
These generalized means give rise to a hierarchy of concavity properties for probability measures. In particular, a probability measure $\mu$ on $\mathbb{R}^n$ is said to be $\alpha$-concave, $\alpha \in [-\infty,+\infty]$, if for every $\theta \in [0,1]$ and for all compact sets $A,B \subset \R^n$ such that $\mu(A)\mu(B)>0$, one has
\begin{equation*}
\mu((1-\theta)A + \theta B) \geq  M_\alpha^{\theta}(\mu(A),\mu(B)).
\end{equation*}
Here, the parameter $\alpha$ is understood as a convexity parameter. It is easy to see by Jensen's inequality that any $\alpha$-concave measure is $\gamma$-concave for all $\gamma \leq \alpha$. In particular, any $\alpha$-concave measure is $-\infty$-concave. The class of $-\infty$-concave measures is called convex measures, and the class of $0$-concave measures is called log-concave measures.

Likewise, a function $f \colon \R^n \to [0,+\infty)$ is {$\kappa$-concave}, $\kappa \in [-\infty,+\infty]$, if for all $\theta \in [0,1]$, for all $x,y \in \R^n$ such that $f(x)f(y)>0$, one has
\begin{equation*}
f((1-\theta)x + \theta y) \geq M_{\kappa}^{\theta}(f(x),f(y)).
\end{equation*}

The relationship between $\alpha$-concave probability measures and $\kappa$-concave functions is described in the celebrated work of Borell \cite{B1,B2}. Namely, an $\alpha$-concave probability measure on $\mathbb{R}^n$ that is absolutely continuous (with respect to Lebesgue measure) has a $\kappa$-concave probability density function, where $\kappa = \frac{\alpha}{1-\alpha n}$. The class of $\alpha$-concave distributions provides a natural extension of the extensively studied class of log-concave distributions. For negative values of the convexity parameter, it also includes heavy-tailed distributions that fall outside the log-concave setting, such as Cauchy-type distributions. Therefore $\alpha$-concavity provides a framework that interpolates between the well-developed log-concave theory and broader classes of heavy-tailed probability measures. We refer to \cite{B1,B2} for further background on $\alpha$-concave measures.

Motivated by the ``continuous'' theory, we consider the following natural discrete counterpart of $\alpha$-concavity on the set of integers $\mathbb{Z}$.

\begin{definition}

    A sequence of non-negative real numbers $\{x_k\}_{k \in \Z}$ is $\alpha$-concave if for all $k \in \Z$,
\begin{equation}\label{def-alpha}
x_k \geq M_\alpha^{1/2}(x_{k-1},x_{k+1}),
\end{equation}
and the sequence $\{x_k\}$ has no internal zeros in the sense that $x_m x_n > 0$ implies that $x_k > 0$ for $m \leq k \leq n$.

\end{definition}

A sequence is called $\alpha$-affine if equality holds in \eqref{def-alpha} throughout its support. As in the continuous setting, the limiting cases $\alpha \in \{-\infty, 0, \infty\}$ are interpreted by continuity. For example, $0$-concavity coincides with (discrete) log-concavity.

An integer-valued random variable $X$ is called discrete $\alpha$-concave if its probability mass function forms an $\alpha$-concave sequence.

\begin{ex}\label{examples}

Several families of discrete distributions fall within the class of discrete $\alpha$-concave distributions. We record a few examples and briefly verify condition \eqref{def-alpha}. 
\end{ex}

\begin{enumerate}
    \item {\bf Zeta (discrete Pareto) distribution} with the parameter $s>1$ is $(-1/s)$-affine. Indeed, its probability mass function is given by 
    $$ f(k) = \frac{k^{-s}}{\zeta(s)}, \quad k=1,2,\dots, $$ where $\zeta(s)$ is the Riemann-zeta function. Since $(\zeta(s)f(k))^{-1/s} = k$ is affine in $k$, the equality holds in \eqref{def-alpha}.
        
    \item {\bf The Zipf distribution} with the parameter $s>0$ on $N$ elements is $(-1/s)$-affine. The probability mass function of the Zipf distribution is given by 
    $$ f(k) = \frac{k^{-s}}{H_{N,s}}, \quad k=1,2,3,\dots,N\,, $$
    where $H_{N,s} = \sum_{k=1}^N k^{-s}$. Thus $(H_{N,s}f(k))^{-1/s} = k$, which is affine in $k$. \vskip2mm
    
    \item {\bf The Yule-Simon distribution} with an integer shape parameter $\rho \in \{1,2, \dots\}$ is $\alpha$-concave with
    \begin{equation}\label{alpha}
    \alpha = - \sum_{i=1}^{\rho+1} \frac{1}{i^2} / \left( \sum_{i=1}^{\rho + 1} \frac{1}{i} \right)^2.
    \end{equation}
    Recall that the probability mass function of the Yule-Simon distribution with shape parameter $\rho > 0$ is given by 
    $$ f(k, \rho) = \rho B(k,\rho+1), \quad k\geq 1, $$
    where $B$ is the beta function. When $\rho$ is an integer, one may write
    $$ f(k,\rho) = \frac{\rho \rho!}{k(k+1)\cdots(k+\rho)}, $$
    and the value of $\alpha$ in \eqref{alpha} is deduced from the second derivative of the function $f(x,\rho)^{-\alpha}$, $x \in \R$, evaluated at $x=1$.

    \end{enumerate} 

These distributions arise naturally in a variety of applications. The Zeta distribution, for example, has been used to give probabilistic proofs of certain number-theoretic identities \cite{K, LH, N, T}. Zipf distributions are widely used to model power-law behavior in areas such as linguistics, information theory, and network data \cite{L}, while the Yule-Simon distribution arises naturally in preferential attachment models \cite{PPS, S}.

Compared with the discrete log-concave setting, considerably less is known about probabilistic inequalities and properties for discrete $\alpha$-concave distributions when $\alpha<0$. The goal of this paper is to develop several probabilistic inequalities for discrete $\alpha$-concave distributions, with particular emphasis on the range $-1/3<\alpha<0$, which guarantees existence of second moment. Our approach is based on two complementary ideas. First, we construct a continuous $\alpha$-concave extension of a discrete $\alpha$-concave sequence, allowing certain estimates from the continuous setting to be transferred to $\mathbb{Z}$. Second, we extend a well-known majorization principle that reduces several extremal problems to suitable $\alpha$-affine distributions. These tools yield, in particular, the following maximum-variance inequality, which is the first main result in the paper. Related inequalities connecting the maximum probability, entropy, and variance have previously been studied in the discrete log-concave setting; see, for example \cite{Ara, BMM, GMPP, JMNS}.

\begin{theorem}\label{maximum-vs-variance}

Let $\alpha \in (-1/3 , 0)$. If $X$ is a discrete $\alpha$-concave random variable on $\Z$ with the maximum probability $M(X)$, then
    \begin{equation*}
    M(X)^2 \left( 1 + \dfrac{1}{2} c(\alpha) \Var(X) \right) \leq 1,\end{equation*}
where $c(\alpha) = \dfrac{ (1+ 2\alpha)(1+ 3\alpha)}{(1+\alpha)^2}$.

\end{theorem}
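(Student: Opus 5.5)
We aim to prove $\Var(X)\le \frac{2}{c(\alpha)}\big(M^{-2}-1\big)$, where $M=M(X)$. The plan has three steps: split $X$ at its mode into two monotone one-sided pieces, use a convex majorization principle to reduce each piece to an $\alpha$-affine extremal sequence, and then compute directly for those extremals.

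\emph{Splitting at the mode.} Let $p=\{p_k\}$ be the pmf and, after translating, assume $p_0=M$. Since $p$ is $\alpha$-concave with $\alpha<0$, the sequence $p^{\alpha}$ is convex on the support. Hence $p$ is unimodal, nondecreasing on $(-\infty,0]$ and nonincreasing on $[0,\infty)$. Take $Y$ with law $p$ and write $X=Y$. We bound $\Var(X)\le \E[X^2]=\sum_k k^2p_k$, or more sharply use $\E|X-m|^2$ for a suitable centering $m$, by treating the two sides separately. On each side we obtain a nonincreasing $\alpha$-concave sequence $q_0=M\ge q_1\ge\dots$ with prescribed total mass $s$. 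For such a sequence we want to maximize $\sum_k \phi(k)q_k$ with $\phi$ convex (here $\phi(k)=k^2$ or a shifted square).

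\emph{Majorization.} The key tool is the majorization principle announced in the introduction. Among nonincreasing $\alpha$-concave sequences with fixed $q_0$ and fixed mass, the $\alpha$-affine sequence $q_k=M(1+tk)^{1/\alpha}$, with $t$ chosen to match the mass, majorizes in the convex order. The reason is that $q^{\alpha}$ is convex with the same starting value, so $q$ crosses the affine profile exactly once, from above to below. Equal masses together with a single crossing then give $\sum\phi(k)q_k\le\sum\phi(k)q^{\mathrm{aff}}_k$ for every convex increasing $\phi$ with $\phi(0)=0$. This is the standard Karlin--Novikoff cut criterion. The main obstacle is that for discrete sequences the affine extremal may have to be truncated or allowed a non-integer endpoint, so the extremal class may need to be widened slightly, much as in the log-concave proofs where geometric-type laws are extremal. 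I would first try the untruncated $(1+tk)^{1/\alpha}$ family, which has finite second moment precisely because $-1/\alpha>3$.

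\emph{Computing for the extremals.} To compute the extremal moments, I would use the continuous $\alpha$-concave extension mentioned in the introduction, or compare sums with integrals. For the density $f(x)=M(1+tx)^{1/\alpha}$ on $[0,\infty)$ one has
\[
\int f = \frac{M}{t}\cdot\frac{-\alpha}{1+\alpha}, \qquad \int x^2 f = \frac{2M}{t^3}\cdot\frac{-\alpha^3}{(1+\alpha)(1+2\alpha)(1+3\alpha)} .
\]
Eliminating $t$ gives the second moment per side in terms of the mass and $M$. The constant $\frac{(1+\alpha)^2}{(1+2\alpha)(1+3\alpha)}$ emerges, matching $c(\alpha)$, and the case $\alpha\to0$ recovers the known log-concave bound $M^2(1+\Var)\cdot$const.

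The discrete sums then need correction terms. These should produce the "$-1$" in $M^{-2}-1$, via the unit lattice spacing, as in the log-concave inequality $M^2(1+\frac12\cdot 12\Var)$-type results. Finally, I would combine the two sides using Cauchy--Schwarz or convexity in the split of mass $s_1+s_2=1-M$. The worst case should be the symmetric split, and optimizing over the split yields $M^2\big(1+\frac12 c(\alpha)\Var(X)\big)\le1$.
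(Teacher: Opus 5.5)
Your reduction step is the same as the paper's. After moving the mode to $0$, on each half-line you compare the pmf with $M(1+tk)^{1/\alpha}$, which has the same value at $0$ and the same mass. The single crossing of the convex $p^\alpha$ with the affine profile gives tail domination, and hence $\sum_{k\ge0}k^2q_k\le\sum_{k\ge0}k^2q^{\mathrm{aff}}_k$. Strictly speaking this is first-order dominance, i.e.\ the increasing-convex order rather than the convex order, since the means differ.

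Your truncation worry is unfounded. The map $t\mapsto\sum_{k\ge0}M(1+tk)^{1/\alpha}$ decreases continuously from $+\infty$ to $M$, so every side of mass $s>M$ is matched by an untruncated profile, and a side of mass $M$ contributes nothing.

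The genuine gap is what comes next. Eliminating $t$ in your continuous integrals gives $\tfrac{c(\alpha)}{2}M^2\int x^2 f=s^3$, which is only the continuous bound. Transferring it through the piecewise extension loses a factor $3$ and the $+1$, leaving only $\tfrac{c(\alpha)}{6}M^2\Var(X)\le 1$. What the theorem actually requires is a discrete inequality for the one-sided extremal. With $\beta=-1/\alpha$, $F(\lambda)=\sum_{k\ge0}(1+\lambda k)^{-\beta}$ and $G(\lambda)=\sum_{k\ge1}k^2(1+\lambda k)^{-\beta}$, it reads
\[
c(\alpha)\,G(\lambda)\le 2\bigl(F(\lambda)^3-F(\lambda)\bigr)\quad\text{for all }\lambda>0,
\]
or equivalently $\tfrac{c(\alpha)}{2}M^2 m_2\le s^3-sM^2$ on each side. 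Saying that lattice correction terms ``should produce the $-1$'' does not prove this. The Euler--Maclaurin picture $F\approx 1/r+1/2$, $c(\alpha)G\approx 2/r^3$ with $r=(\beta-1)\lambda$ is only asymptotic as $\lambda\to0$. The inequality must hold uniformly, including when $\lambda$ is large and the mass sits at the mode.

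This is the main technical work of the paper. It writes $(1+\lambda k)^{-\beta}$ as a Gamma mixture of $e^{-\lambda kx}$. Lazarevi\'c's inequality then gives $c(\alpha)G\le 2/r^3$, and $1/(e^u-1)\ge 1/u-1/2$ gives $F-1\ge 1/r-1/2$; together these settle $r\le 22/13$. For $r\ge 22/13$ a separate argument is needed: a sum--integral bound for the unimodal function $h(x)=(x+1)^2\bigl(1+\tfrac{\lambda}{1+\lambda}x\bigr)^{-\beta}$, plus an estimate of $\sup h$, compared against $4(1+\lambda)^{-\beta}\le 4(F-1)$.

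Your plan for combining the two sides is also backwards. Let $s_1,s_2$ be the masses of the two sides including the mode, so $s_1+s_2=1+M$ and $s_i\in[M,1]$. Adding the per-side inequalities gives $\tfrac{c(\alpha)}{2}M^2\E[X^2]\le s_1^3+s_2^3-(1+M)M^2$, so you need $s_1^3+s_2^3\le 1+M^3$. By convexity the maximum is at the endpoints $\{s_1,s_2\}=\{M,1\}$. So the worst case is the \emph{one-sided} configuration, consistent with the asymptotic sharpness of one-sided $\alpha$-affine laws, and the symmetric split is the most favorable case. The paper handles this step through convexity of $t\mapsto t^3-t$. No Cauchy--Schwarz is needed, but an argument aimed at the symmetric split would not cover the general case.
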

We further apply Theorem \ref{maximum-vs-variance} to obtain bounds for the L\'evy concentration function and for discrete R\'enyi entropy. The latter leads to a reverse entropy power inequality (EPI) for sums of independent discrete $\alpha$-concave random variables described in Theorem \ref{Reverse_EPI}. We refer to \cite{HAT, HV, JY, MMR, MM26, WM} for further investigation on discrete entropy power inequalities.

For an integer-valued random variable $X$ with probability mass function $f$, recall that the R\'{e}nyi entropy of order $r>0$, $r \neq 1$, is defined by,  
$$ H_{r}(X) =\dfrac{1}{1-r}\log \left( \sum_{k \in \mathbb{Z}}\,f(k)^{r} \right). $$
The R\'{e}nyi entropy power of order $r$ is,
$$ N_{r}(X) = e^{2H_{r}(X)} = \left( \displaystyle \sum_{k \in \mathbb{Z}} f(k)^{r}\right)^{\frac{2}{1-r}}. $$
Since $N_r(X)\geq 1$ for an arbitrary discrete random variable $X$, the quantity $\Delta_r(X) = N_r(X)-1$ is often considered to play the role of the continuous R\'{e}nyi entropy power as it reflects the size of the variance more accurately. Our next main result is the following form of reverse EPI.

\begin{theorem}\label{Reverse_EPI}

Let $\alpha \in (-1/3 , 0)$. Let $X_1, \dots, X_n$, $n \geq 1$, be independent discrete $\alpha$-concave random variables. Then, for all $r>1$,
$$ \Delta_r \left( \sum_{i=1}^n X_i \right) \leq C(r,\alpha) \sum_{i=1}^n \Delta_r(X_i), $$
with $C(r,\alpha) = \dfrac{8(3r-1)}{(r-1)c(\alpha)}$, where $c(\alpha) = \dfrac{ ( 1+ 2\alpha)(1 + 3\alpha)}{(1+\alpha)^2}$.
    
\end{theorem}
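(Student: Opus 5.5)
Combining Theorem~\ref{maximum-vs-variance} with two elementary facts gives the reverse EPI: R\'enyi entropies of order $r>1$ are controlled from both sides by $M(X)$, and variance is additive under independent sums. Throughout write $S=\sum_{i=1}^n X_i$, $M=M(S)$, $M_i=M(X_i)$ and $\sigma_i^2=\Var(X_i)$. The argument runs in four steps.

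\textbf{Step 1 (upper bound for the sum).} Since $\sum_k f(k)^r \ge \max_k f(k)^r = M^r$ and the exponent $2/(1-r)$ is negative, we get $N_r(S) \le M^{-2r/(r-1)}$.

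Whether the sum is $\alpha$-concave matters here. $\alpha$-concavity is not obviously preserved under convolution for $\alpha<0$. I therefore do not want to apply Theorem~\ref{maximum-vs-variance} to $S$. Instead I will use a general upper bound on $M(S)$ valid for any integer random variable. For $r>1$ the upper estimate on $N_r(S)$ needs only a \emph{lower} bound on $M(S)$, and for arbitrary integer-valued $S$ a Chebyshev-type argument gives
\[
M(S)^{-2}\le 1+ 12\Var(S)
\]
or a similar bound. This holds because a distribution with small maximal mass must be spread out: the uniform distribution minimizes variance at fixed $M$, giving $\Var \ge (M^{-2}-1)/12$.

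Raising this to the power $r/(r-1)$ gives
\[
N_r(S)\le \bigl(1+12\Var(S)\bigr)^{r/(r-1)}.
\]
The exponent is a problem for large variance, so I refine: compare $N_r(S)$ with $N_\infty(S)=M^{-2}$ up to the factor $r/(r-1)$ only at the level of $\Delta$. More precisely, I will try the cleaner route $N_r(S)\le N_2$-type bounds, or use $\Delta_r(S)\le \frac{r}{r-1}\cdot$const$\cdot\Var(S)$ when $\Var(S)$ is bounded. I will handle the regime $\Var(S)\lesssim 1$ separately from large $\Var(S)$, which yields a constant like $(3r-1)/(r-1)$.

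\textbf{Step 2 (lower bound for each summand).} For each $X_i$, use $\sum_k f^r \le M_i^{r-1}$, which gives $N_r(X_i)\ge M_i^{-2}$. Theorem~\ref{maximum-vs-variance} applied to $X_i$, which is $\alpha$-concave, then gives
\[
\Delta_r(X_i)\ge M_i^{-2}-1 \ge \tfrac12 c(\alpha)\sigma_i^2 .
\]

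\textbf{Step 3 (combine).} Since $\Var(S)=\sum_i \sigma_i^2$, we obtain
\[
\Delta_r(S) \le \frac{8(3r-1)}{(r-1)c(\alpha)} \sum_i \Delta_r(X_i),
\]
provided Step~1 yields $\Delta_r(S)\le \frac{4(3r-1)}{r-1}\Var(S)$.

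\textbf{Step 4 (summands with zero variance).} If some $\Delta_r(X_i)=0$, that $X_i$ is a point mass and can be removed without changing $\Delta_r(S)$, so this case is trivial.

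\textbf{Main obstacle.} The crux is Step~1: proving $\Delta_r(S)\le \frac{4(3r-1)}{r-1}\Var(S)$ for an arbitrary integer-valued $S$. The difficulty is the small-variance regime, where $M$ is close to $1$ and $N_r$ must be compared with $1$ linearly in $\Var(S)$.

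I would attack it as follows. Write $p=M$ and bound $1-p\le \P(S\ne m)\le \Var(S)$ plus mean-shift terms, using integrality: $|S-m|\ge1$ off the mode. Then use $\sum f^r\ge p^r$, which gives
\[
N_r-1\le p^{-2r/(r-1)}-1\lesssim \tfrac{2r}{r-1}\,\frac{1-p}{p^{2r/(r-1)}}.
\]
Combine this with $\Var(S)\ge p(1-p)$-type bounds for $p\ge 1/2$, and with the uniform-comparison bound for $p<1/2$. Choosing the split point and tracking constants should produce $(3r-1)/(r-1)$.
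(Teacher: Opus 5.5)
Your Steps 2 and 3 coincide with the paper's proof. The bound $N_r(X_i)\ge M(X_i)^{-2}$ (equivalently $\Delta_\infty\le\Delta_r$), combined with Theorem~\ref{maximum-vs-variance}, gives $\frac{c(\alpha)}{2}\Var(X_i)\le\Delta_r(X_i)$. Additivity of variance then finishes once you know $\Delta_r(S)\le\frac{4(3r-1)}{r-1}\Var(S)$. The gap is that this inequality, your Step 1, is never established, and the route you sketch cannot establish it. The paper does not derive it either. It quotes it as Proposition~\ref{BMM_Proposition} (\cite[Proposition 3.3]{BMM}), valid for every integer-valued random variable with finite variance and every $1<r\le\infty$. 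That is exactly where the constant $\frac{4(3r-1)}{r-1}$, and hence $C(r,\alpha)$, comes from.

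The failing ingredient is $N_r(S)\le M(S)^{-2r/(r-1)}$, obtained from $\sum_k f(k)^r\ge M^r$. It keeps only the largest atom and loses a power.
\begin{itemize}
\item Take $S$ uniform on $\{1,\dots,N\}$. This law is $\alpha$-concave, so the situation already occurs for $n=1$; the sum of two independent such uniforms, with $M=1/N$ and $\Var=(N^2-1)/6$, does the same for $n=2$.
\item Then $N_r(S)=N^2$ and $\Var(S)=(N^2-1)/12$, but $M(S)^{-2r/(r-1)}=N^{2r/(r-1)}$ with $2r/(r-1)>2$.
\item Hence the intermediate claim $M(S)^{-2r/(r-1)}-1\le C\,\Var(S)$ is false for large $N$ for every constant $C$. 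No splitting of regimes and no sharper lower bound on $\Var(S)$ in terms of $M(S)$ can rescue it.
\item It also misses the target constant at moderate spread. For Bernoulli$(1/2)$ and $r=2$ it yields $\Delta_2\le 15$ against $\Var=1/4$, a ratio of $60>20=\frac{4(3r-1)}{r-1}$, while in truth $\Delta_2=3$.
\end{itemize}
So your diagnosis of the obstacle is reversed. Near $M(S)=1$, the expansion $M^{-2r/(r-1)}-1\approx\frac{2r}{r-1}(1-M)$ together with $\Var(S)\ge\frac12(1-M)$ is harmless. The real difficulty is the spread-out regime, which needs a lower bound on $\sum_k f(k)^r$ that sees the whole distribution, not just $\max_k f(k)$. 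The simplest fix is to invoke Proposition~\ref{BMM_Proposition}, as the paper does.
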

Finally, we establish a polynomial-type concentration inequality. Since exponential moments need not be finite in the $\alpha$-concave setting, $\alpha < 0$, the usual Chernoff approach (see, e.g., \cite{BLM, V}) is not directly available. Instead, we utilize the single-crossing comparison with $\alpha$-affine distributions underlying the majorization argument of Section \ref{majorization-sec}.

\begin{theorem} \label{concentration_bound}

Let $\alpha \in (-1 , 0)$. If $X$ is a discrete $\alpha$-concave random variable with a mode $m$, then for all $t>0$,
\begin{equation} \label{concentration_alpha_concave}
     \mathbb{P}(|X-m|\geq t) \leq \left(1 + \dfrac{M(X)\,t}{-(1+ 1/\alpha)}\right)^{1+ 1/\alpha} + M(X)\left(1+ \dfrac{t}{-(1+1/\alpha)}\right)^{1+ 1/\alpha}.
\end{equation}

\end{theorem}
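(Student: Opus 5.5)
Let $f$ denote the probability mass function of $X$, $M=M(X)=f(m)$, and $\beta:=-(1+1/\alpha)>0$; note $1+1/\alpha=-\beta<0$ for $\alpha\in(-1,0)$. We split
$$\P(|X-m|\ge t)\le \P(X-m\ge t)+\P(X-m\le -t)$$
and treat each tail separately. The plan is a single-crossing comparison of the tail of $f$ with an $\alpha$-affine profile that has the same value $M$ at the mode. Summing that comparison produces one term of \eqref{concentration_alpha_concave}, and the $M(X)$ factor in the second term is the price of passing from sums to integrals.

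\textbf{Step 1 (pointwise domination).} On the right of the mode, set $g(k)=f(m+k)^{\alpha}$ for $k\ge0$. Since $\alpha<0$, $\alpha$-concavity of $f$ means $g$ is convex and increasing on the support, with $g(0)=M^{\alpha}$. Moreover $f$ is nonincreasing for $k\ge0$, and $f(m+k)\le 1-\P(X\le m+k-1)$. I would compare $f$ with the affine profile $h(k)=M^{\alpha}+ck$, that is,
$$\varphi(k)=\bigl(M^{\alpha}+ck\bigr)^{1/\alpha},$$
where the slope $c>0$ is chosen so that $\sum_{k\ge0}\varphi(k)$, or the integral version, matches the available mass. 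Convexity of $g$ together with $g(0)=h(0)$ gives a single crossing: $g-h$ is first $\le0$ and then $\ge0$. Hence $f\ge\varphi$ near the mode and $f\le\varphi$ beyond some point. A standard majorization argument of the kind developed in Section \ref{majorization-sec} then yields the tail comparison
$$\sum_{k\ge t}f(m+k)\le\sum_{k\ge t}\varphi(k).$$
I expect the cleanest normalization to come from the continuous setting. Using the paper's continuous $\alpha$-concave extension, I would take $\varphi$ to be the $\alpha$-affine density with maximum value $M$ and unit mass, i.e. $\int_0^\infty(M^\alpha+cx)^{1/\alpha}dx=\tfrac12$ or $1$, depending on how mass is split between the two sides. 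This gives
$$c=\frac{M^{1+\alpha}}{\beta}\cdot(\text{const}),$$
so that the tail integral becomes
$$\int_t^\infty\varphi=\Bigl(1+\frac{Mt}{\beta}\Bigr)^{-\beta},$$
which is exactly the first term of \eqref{concentration_alpha_concave}.

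\textbf{Step 2 (sum versus integral).} Since $\varphi$ is decreasing,
$$\sum_{k\ge t}\varphi(k)\le\varphi(t)+\int_t^\infty\varphi,$$
or alternatively $\sum_{k\ge \lceil t\rceil}\le\int_{\lceil t\rceil-1}^\infty$. The extra boundary term has to be bounded by something like
$$M\Bigl(1+\frac{t}{\beta}\Bigr)^{-\beta}.$$
This follows if we use the cruder pointwise bound
$$f(m+k)\le M\Bigl(1+\frac{k}{\beta}\Bigr)^{-\beta},$$
and then the right tail and the left tail each contribute. This bound is the statement that $g(k)\ge M^{\alpha}(1-\alpha k/(1+\alpha))$, i.e. a lower bound on the slope of the convex function $g$. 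I would derive it from the constraint that $\sum f\le1$ forces $g$ to grow at least linearly at a definite rate. Alternatively it follows from comparison with the extremal $\alpha$-affine sequence whose pmf decays as $M(1+k/\beta)^{-\beta}$, where the exponent $1/\alpha=-1/|\alpha|$ must be reconciled with $-\beta$.

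\textbf{Combining.} Finally I would combine the two sides. For one tail I would use the integral comparison, giving the first term. For the other tail, together with the discretization error, I would use the pointwise bound, giving the second term. The main obstacle is the exponent bookkeeping: identifying the correct extremal $\alpha$-affine profile so that the tails come out with exponent $1+1/\alpha$ rather than $1/\alpha$. This comes from integrating a $1/\alpha$-power profile, $\int(1+x)^{1/\alpha}\sim(1+x)^{1+1/\alpha}$. I would first try working with the continuous extension and its tail function $\P(Y\ge x)$, which is itself $(\alpha/(1+\alpha))$-concave in the Borell sense. In that formulation, the first term is the Borell-type tail bound for one-dimensional $\kappa$-concave densities with $\kappa=\alpha$, and the second term absorbs the discretization error of at most one atom, of size at most $M$ times the pointwise tail decay.
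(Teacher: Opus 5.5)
There is a genuine gap. Step 2 and the combining step do not produce the right-hand side of \eqref{concentration_alpha_concave}, and the auxiliary estimate they rely on is false.

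Your ``cruder pointwise bound'' $f(m+k)\le M(1+k/\beta)^{-\beta}$ already fails for $X$ uniform on $\{0,1\}$, which is $\alpha$-concave for every $\alpha$. There $M=f(1)=\tfrac12$, while the bound requires $f(1)\le\tfrac12(1+1/\beta)^{-\beta}<\tfrac12$. Convexity of $f^{\alpha}$ allows it to be flat near the mode, so no lower bound on its slope at the mode is available. Even if the bound held, summing it over $k\ge t$ gives a tail of order $M(1+t/\beta)^{1-\beta}$, not $M(1+t/\beta)^{-\beta}$, and the sum diverges when $\beta\le1$, i.e.\ $\alpha\le-1/2$.

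More fundamentally, nothing justifies treating the two tails asymmetrically (integral comparison for one, pointwise bound for the other), since a priori either tail can be the heavy one. With your unit-mass normalization, each tail yields its own copy of $(1+Mt/\beta)^{-\beta}$. Each tail also carries a boundary term $\varphi(\lceil t\rceil)=M(1+M\lceil t\rceil/\beta)^{-1-\beta}$, which is not dominated by $M(1+t/\beta)^{-\beta}$ when $M$ is small. You therefore end with roughly twice the claimed bound. The normalization $\tfrac12$ is not admissible either: the tail domination needs $\sum_{k\ge0}\varphi(k)\ge\mathbb{P}(X\ge m)$, which can be close to $1$.

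The missing idea is that the second term is not a discretization error; it comes from how the mass splits across the mode. The paper proceeds as follows.
\begin{enumerate}
\item Normalize the right-hand comparison profile to the actual one-sided mass $q_+=\mathbb{P}(X\ge m)$, in the discrete sense $\sum_{k\ge0}M(1+\lambda_+k)^{1/\alpha}=q_+$. Single crossing then gives the tail domination.
\item Apply the ratio estimate of Lemma \ref{tec_lemma}: the tail sum divided by the total sum is at most $(1+\lambda\lceil t\rceil)^{1+1/\alpha}$, which follows from monotonicity of $\lambda\mapsto\lambda\sum_{k\ge0}(1+\lambda k)^{1/\alpha}$.
\item Compare the sum with the integral to get $\lambda_+\ge M/(q_+\beta)$. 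Together with the previous step this gives, with no additive loss, $\mathbb{P}(X-m\ge t)\le\Phi(q_+)$, where $\Phi(q)=q\bigl(1+\tfrac{Mt}{q\beta}\bigr)^{-\beta}$. Likewise $\mathbb{P}(X-m\le-t)\le\Phi(q_-)$ with $q_-=\mathbb{P}(X\le m)$.
\item Optimize jointly over the split. $\Phi$ is convex, $q_++q_-=1+M$ (the mode atom is counted on both sides), and $M\le q_\pm\le1$. Hence $\Phi(q_+)+\Phi(q_-)$ is maximized at the split $(1,M)$.
\end{enumerate}
The value $\Phi(1)+\Phi(M)$ is exactly the right-hand side of \eqref{concentration_alpha_concave}. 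The term $M(1+t/\beta)^{-\beta}=\Phi(M)$ is the bound for a half-line carrying only the mode atom. Without this joint optimization over $(q_+,q_-)$, your approach cannot produce a single copy of the first term.
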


Note that since $\mathbb{P}(|X-m|\geq t)\leq 1-M(X)$ for $t>0$, the right-hand side of \eqref{concentration_alpha_concave} may be replaced with its minimum with $1-M(X)$.

\begin{remark}\label{rem-concentration}

The polynomial nature of the bound in Theorem \ref{concentration_bound} is consistent with the heavy-tailed behavior of $\alpha$-concave distributions for $\alpha<0$. Moreover, as $\alpha \to 0^-$, the right-hand side of \eqref{concentration_alpha_concave} approaches 
$$e^{-M(X)\,t}+ M(X)e^{-t}\,,$$
exhibiting the expected transition to exponential-type concentration in the log-concave limit.
\end{remark}

\noindent
{\bf Outline of the Paper.} Section \ref{majorization-sec} establishes a Whitt-type majorization principle. In Section \ref{continuous-discrete}, we develop a continuous extension of discrete $\alpha$-concave sequences and derive some moment estimates.  In Section \ref{max-var}, we apply the majorization principle directly to the discrete setting to prove Theorem \ref{maximum-vs-variance}. Sections \ref{Levy-concentration} and \ref{Reverse-EPI} give applications to the L\'evy concentration function and an entropy power inequality, respectively, including the proof of Theorem \ref{Reverse_EPI}. Finally, Section \ref{concentration} establishes the concentration inequality of Theorem \ref{concentration_bound}.

\section{Convex majorization}\label{majorization-sec}

In this section, we introduce a Whitt-type convex majorization principle that will be used in Sections \ref{continuous-discrete} and \ref{max-var}. The following theorem generalizes the majorization argument of \cite[Theorem 1]{MM22} by replacing equality of the first moment with an inequality. As a consequence, the conclusion is restricted to non-decreasing convex functions rather than arbitrary convex functions. This restriction is sufficient for our applications to moment bounds on the positive real line. We refer to \cite[Section 2]{MM22} for further details on convex majorization (see also \cite{KN, W}). 

 \begin{theorem}\label{majorization}
    Let $\mu$ be a measure on $[0, +\infty)$. Suppose that $f$ and $g$ are Borel measurable functions on $[0, +\infty)$ such that
    \begin{align*}
        \int_0^\infty f(x) d\mu(x) = \int_0^\infty g(x) d\mu(x) 
    \end{align*}
    and 
    \begin{align*}
        \int_0^\infty x f(x) d\mu(x) \leq \int_0^\infty x g(x) d\mu(x)
    \end{align*}
    and the integrals are finite. If there exists an interval $I := [a,b] \subseteq [0,\infty)$ such that $g(y) \leq f(y)$ for $y \in I$ while $g(y) \geq f(y)$ for $y \notin I$, then $\varphi \colon (0,\infty) \to \mathbb{R}$ convex nondecreasing implies,
    \begin{align*}
        \int_0^\infty \varphi(x) f(x) d\mu(x) \leq \int_0^\infty \varphi(x) g(x) d\mu(x).
    \end{align*}
\end{theorem}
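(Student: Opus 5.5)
The plan is to compare $\varphi$ with a suitable affine function $L(x)=\varphi(a)+s(x-a)$ with slope $s\ge 0$, chosen so that $\varphi-L$ changes sign exactly at the endpoints of $I$. Set $h:=g-f$. The hypotheses then read
$$\int_0^\infty h\,d\mu=0,\qquad \int_0^\infty x\,h\,d\mu\ge 0,$$
with $h\le 0$ on $I=[a,b]$ and $h\ge 0$ off $I$. The goal is to show $\int_0^\infty \varphi\,h\,d\mu\ge 0$.

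First I handle the domain. Since $\varphi$ is convex on $(0,\infty)$, it lies above a tangent line at $x=1$: $\varphi(x)\ge \varphi(1)+\varphi'_+(1)(x-1)$. Since $\varphi$ is also nondecreasing, the limit $\varphi(0^+)=\inf\varphi$ exists, and this tangent bound shows it is finite. I extend $\varphi$ to $0$ by $\varphi(0):=\varphi(0^+)$; the extension is still convex and nondecreasing on $[0,\infty)$. The affine lower bound, together with finiteness of $\int f\,d\mu$, $\int xf\,d\mu$ and the same for $g$, ensures $\int\varphi f\,d\mu$ and $\int \varphi g\,d\mu$ are well defined in $(-\infty,+\infty]$. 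If $\int\varphi f\,d\mu=+\infty$, the same argument applied to truncations $\min(\varphi,N)$ (convex only after care) is awkward. Instead I will simply work with $\varphi-L$, which is bounded below, and use the hypotheses to move terms across, so the inequality holds in the extended sense.

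Next I choose $L$.
\begin{itemize}
\item If $a<b$, take $L$ to be the chord through $(a,\varphi(a))$ and $(b,\varphi(b))$. Its slope is $s=(\varphi(b)-\varphi(a))/(b-a)\ge 0$ because $\varphi$ is nondecreasing.
\item If $a=b$, take $L$ to be a supporting line at $a$ with slope $s=\varphi'_+(a)\ge 0$. If $a=0$, use instead any slope $s\in[0,\varphi'_+(0^+)]$, or simply $s=0$, which is a valid support for a nondecreasing function at its infimum.
\end{itemize}
By convexity, $\varphi\le L$ on $[a,b]$ and $\varphi\ge L$ on $[0,\infty)\setminus[a,b]$. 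In the degenerate case $a=b$, the first of these is an equality at $a$.

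Combining this with the sign pattern of $h$ gives $(\varphi-L)\,h\ge 0$ pointwise on $[0,\infty)$, and therefore
$$\int \varphi\,h\,d\mu\;\ge\;\int L\,h\,d\mu\;=\;(\varphi(a)-sa)\int h\,d\mu+s\int x\,h\,d\mu\;=\;s\int x\,h\,d\mu\;\ge\;0.$$
Rearranging this yields $\int\varphi f\,d\mu\le\int\varphi g\,d\mu$.

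The argument itself is elementary. The only step needing care is the construction of $L$ with nonnegative slope, and this is exactly where the nondecreasing assumption replaces the equality of first moments used in \cite[Theorem 1]{MM22}. The remaining work is bookkeeping: the boundary point $0$, the degenerate interval $a=b$, and the legitimacy of splitting the integrals. For the splitting, I will write $\int\varphi g\,d\mu-\int\varphi f\,d\mu=\int(\varphi-L)h\,d\mu+\int Lh\,d\mu$, in which every term is either finite or a nonnegative integral.
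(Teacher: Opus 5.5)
Your proposal is correct and follows the paper's proof: the paper also subtracts the chord $mx+k$ through $(a,\varphi(a))$ and $(b,\varphi(b))$, and uses $m\ge 0$ with $\int (g-f)\,d\mu=0$ and $\int x(g-f)\,d\mu\ge 0$. It then concludes from the pointwise nonnegativity of $(g-f)(\varphi-mx-k)$. Your extra bookkeeping covers details the paper leaves implicit: extending $\varphi$ to $0$ by $\varphi(0^+)$, using a supporting line when $a=b$ (where the paper's chord formula divides by zero), and the integrability remarks, which tacitly use $f,g\ge 0$.
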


\begin{proof}
    We adapt the arguments from \cite[Proof of Theorem 1]{MM22}. Taking $\tilde{\varphi}(x) = \varphi(x) - (mx+k)$ with
    $$ m := \frac{\varphi(b)- \varphi(a)}{b-a}, \qquad k := \frac{\varphi(a)b - \varphi(b)a}{b-a}, $$
    yields $\tilde{\varphi}(a) = \tilde{\varphi}(b) = 0$. Note that $m \geq 0$ as $\varphi$ is nondecreasing, therefore
    \[
        \int_0^\infty (g -f)(x) (\varphi - \tilde{\varphi})(x) d\mu(x) = \int_0^\infty (g - f)(x) (mx + k) d\mu(x) = m \int_0^\infty x(g-f)(x) d\mu(x) \geq 0.
    \]
    Hence,
    $$ \int_0^\infty (g - f)\varphi d\mu \geq \int_0^\infty (g - f) \tilde{\varphi} d\mu \geq 0, $$
    where the last inequality follows since $(g-f) \tilde{\varphi}$ is a non-negative function, by assumptions on $f$ and $g$, and by construction of $\tilde{\varphi}$.
\end{proof}

\section{From continuous to discrete $\alpha$-concavity}\label{continuous-discrete}

Let us begin with a construction that associates an $\alpha$-concave function on $\mathbb{R}$ with every discrete $\alpha$-concave sequence. This provides a convenient way to transfer certain estimates from the continuous setting to the discrete one. The construction is a direct adaptation of \cite{BMM} and will be referred to as the $\alpha$-piecewise linear extension.

%We start this section by establishing an equivalence between $\alpha$-concave functions and $\alpha$-concave sequences.

\begin{proposition} \label{alpha_piecewise_construction}
    A sequence $\{x_k\}$ is $\alpha$-concave if and only if there exists an $\alpha$-concave function $f$ such that $f(k) = x_k$ for all $k \in \Z$.
\end{proposition}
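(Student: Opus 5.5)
Both directions are handled on the transformed scale. For $\alpha<0$ put $y_k = x_k^{\alpha}$ on the support and $y_k=+\infty$ off it. Condition \eqref{def-alpha} with $\theta=1/2$ then reads $x_k^\alpha \le \tfrac12(x_{k-1}^\alpha + x_{k+1}^\alpha)$, i.e.\ $y_k \le \tfrac12(y_{k-1}+y_{k+1})$. So $\alpha$-concavity of $\{x_k\}$ means $\{y_k\}$ is a discrete convex sequence on an integer interval. Likewise, a function $f$ is $\alpha$-concave exactly when $f^\alpha$ is convex on the convex set $\{f>0\}$.

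For $\alpha>0$ the same holds with ``concave'' in place of ``convex'' and $y=x^\alpha$. For $\alpha=0$ use $y=\log x$ with concavity, which is the construction of \cite{BMM}. The cases $\alpha=\pm\infty$ are handled by the limiting interpretation. I would write the proof for $\alpha<0$, which is the case used later, and note that the others are identical.

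\textbf{Sufficiency.} Suppose $f$ is $\alpha$-concave with $f(k)=x_k$. Take $x=k-1$, $y=k+1$, $\theta=1/2$ in the definition of $\kappa$-concavity with $\kappa=\alpha$; this gives $x_k\ge M_\alpha^{1/2}(x_{k-1},x_{k+1})$ whenever $x_{k-1}x_{k+1}>0$. If $x_{k-1}x_{k+1}=0$, then for $\alpha<0$ the mean $M_\alpha^{1/2}$ is $0$ by the usual convention, so the inequality is trivial.

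For the no-internal-zeros condition: if $x_mx_n>0$ and $m\le k\le n$, write $k=(1-\theta)m+\theta n$. The definition gives $f(k)\ge M_\alpha^\theta(f(m),f(n))>0$.

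\textbf{Necessity.} Given $\{x_k\}$, let its support be the integer interval $[N_1,N_2]$, with endpoints possibly infinite. Define the $\alpha$-piecewise linear extension by letting $f^\alpha$ be the linear interpolation of $y_k=x_k^\alpha$ on each $[k,k+1]\subset[N_1,N_2]$, and setting $f=0$ outside $[N_1,N_2]$. Concretely, for $t\in[0,1]$,
\[
f(k+t)=\big((1-t)x_k^\alpha+t\,x_{k+1}^\alpha\big)^{1/\alpha}.
\]
Then $f(k)=x_k$, and $f>0$ exactly on $[N_1,N_2]$, which is convex. It remains to show $g:=f^\alpha$ is convex there. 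Since $g$ is piecewise linear, its slope on $[k,k+1]$ is $y_{k+1}-y_k$. Discrete convexity $y_{k+1}-y_k\ge y_k-y_{k-1}$ says these slopes are nondecreasing, so $g$ is convex. This is the standard fact that a continuous piecewise linear function with nondecreasing slopes is convex, e.g.\ as the maximum of its affine pieces.

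Finally, convexity of $g$ gives $f((1-\theta)x+\theta y)^\alpha\le(1-\theta)f(x)^\alpha+\theta f(y)^\alpha$ for $x,y$ in the support. Raising to the power $1/\alpha<0$ reverses the inequality and yields $\kappa$-concavity with $\kappa=\alpha$. The case $f(x)f(y)=0$ is excluded by the definition.

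\textbf{Main obstacle.} No step is genuinely hard; the work is bookkeeping. The points needing care are the conventions at the boundary of the support, the unbounded-support cases, and the sign reversal when raising to the power $1/\alpha$ for $\alpha<0$ versus $\alpha>0$. The slope-monotonicity argument is the core step, and it is routine.
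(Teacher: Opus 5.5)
Your proof is correct and is essentially the paper's argument. Like the paper, you interpolate $x_k^{\alpha}$ linearly on each $[k,k+1]$ inside the support, read the discrete inequality as monotonicity of the slopes to get convexity of $f^{\alpha}$ (hence $\alpha$-concavity of $f$ for $\alpha<0$), and obtain the converse by applying $\alpha$-concavity at the midpoint $k=\tfrac12(k-1)+\tfrac12(k+1)$; you are in fact slightly more careful than the paper, which neither verifies the no-internal-zeros condition in the converse direction nor discusses the triples $k-1,k,k+1$ that touch the edge of the support.
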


\begin{proof}
Let $\Delta$ be the smallest closed interval
containing the support of $\{x_k\}$. Define $V(k) = x_k^{\alpha}$, $k \in \Delta \cap \mathbb{Z}$. Let us extend $V$ linearly on every segment $[k,k+1] \subset \Delta$, and put $V = \infty$ 
outside $\Delta$ (when $\Delta \neq \mathbb{R}$). We claim that $V$ is finite 
and convex on $\Delta$. Indeed, whenever $[k-1,k+1] \subset \Delta$, inequality \eqref{def-alpha} is equivalent to
$$
V(k) - V(k-1) \leq V(k+1) - V(k).
$$
But $V(k) - V(k-1) = V'(x)$ on $(k-1,k)$, which implies that for all $x \leq y$ such that $x, y \notin \Z$, $V'(x) \leq  V'(y)$. This property together with the fact that $V$ is continuous on $\{V < \infty\}$ allows us to conclude that $V$ is convex. As a result, we obtain an $\alpha$-concave function
$$
f(x) = V^{\frac{1}{\alpha}}(x), \quad x \in \R,
$$
which coincides with $x_k$ on all integers $k$.\vskip2mm \noindent
Conversely, if $f$ is an $\alpha$-concave function such that $f(k) = x_k$ for all $k\in \mathbb{Z}$, then applying $\alpha$-concavity to the midpoint $k = \frac{1}{2}(k-1) + \frac{1}{2}(k+1)$ gives inequality \eqref{def-alpha} whenever $x_k, x_{k-1}$ and $x_{k+1}$ are in the support of $\{x_k\}$.
\end{proof}
We now apply this construction to the probabilistic setting. Let $X$ be a discrete $\alpha$-concave random variable with probability mass function $f(k)$, $k\in \mathbb{Z}$, and let $f(x)$, $x\in\mathbb{R}$, denote its $\alpha$-piecewise linear extension constructed in Proposition \ref{alpha_piecewise_construction}. Although $f(x)$ need not be a probability density function, the following proposition shows that its integral can be compared with the total mass of the probability mass function. Recall that $\Delta$ denotes the smallest closed interval containing the support of $\{f(k)\}$.

\begin{proposition}\label{two_sided_bound_mode}

Let $X$ be a discrete $\alpha$-concave random variable, and let $m$ be a mode of $X$. If $|\Delta| \geq 2$, then
$$
1 - f(m) \leq \int_{-\infty}^\infty f(x)\,dx \leq 1.
$$
    
\end{proposition}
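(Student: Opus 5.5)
Since $\alpha<0$, the map $t\mapsto t^{1/\alpha}$ is convex and decreasing on $(0,\infty)$. On each segment $[k,k+1]\subset\Delta$ the extension is $f(x)=V(x)^{1/\alpha}$ with $V$ affine there. So $f$ restricted to $[k,k+1]$ is a convex function of $x$, and it takes the values $f(k)$ and $f(k+1)$ at the endpoints. The plan is to compare $\int_k^{k+1}f$ with the endpoint values on each unit segment and then sum over the segments contained in $\Delta$. Outside $\Delta$ we have $V=\infty$, hence $f=0$, so only these segments contribute.

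\textbf{Upper bound.} By convexity on $[k,k+1]$, the chord lies above the graph, so
\[
\int_k^{k+1} f(x)\,dx \le \tfrac12\big(f(k)+f(k+1)\big).
\]
Summing over all segments $[k,k+1]\subset\Delta$ counts each interior integer point twice with weight $\tfrac12$ and each finite endpoint of $\Delta$ once with weight $\tfrac12$. This gives
\[
\int f \le \sum_k f(k) - \tfrac12\big(f(\text{left end})+f(\text{right end})\big) \le 1.
\]
If an end of $\Delta$ is infinite, there is no boundary term on that side, and the inequality still holds. Convergence is not an issue: the sum of nonnegative terms is bounded by $1$.

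\textbf{Lower bound.} Here I would use monotonicity rather than convexity. An $\alpha$-concave sequence is unimodal, because $V$ is convex, so $V$ decreases up to a minimum and then increases. Take the mode $m$. For a segment $[k,k+1]$ with $k+1\le m$, the function $f$ is nondecreasing on it, so $\int_k^{k+1}f\ge f(k)$. For $k\ge m$, $f$ is nonincreasing, so $\int_k^{k+1}f\ge f(k+1)$. Summing these bounds covers every support point except $m$ exactly once:
\begin{itemize}
\item for $j<m$, the point $j$ is counted via the segment $[j,j+1]$, which lies in $\Delta$ because $m\in\Delta$;
\item for $j>m$, the point $j$ is counted via the segment $[j-1,j]$.
\end{itemize}
Hence $\int f\ge \sum_{j\ne m}f(j)=1-f(m)$.

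The hypothesis $|\Delta|\ge 2$ guarantees that $\Delta$ contains at least one full segment, so the extension is a genuine function on a nondegenerate interval. In the degenerate case $|\Delta|=0$ the integral is $0$ while the claimed bound would be $1-f(m)=0$, which is fine anyway; the case $|\Delta|=1$ is also consistent.

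The main step to get right is the bookkeeping in the lower bound, specifically the choice of which endpoint each segment is charged to on either side of the mode, together with verifying unimodality of $f$ from the convexity of $V$. The analytic input is only that $V^{1/\alpha}$ is convex when $V$ is affine and $\alpha<0$.
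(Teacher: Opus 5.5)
Your proof is correct and follows essentially the same route as the paper: the upper bound comes from the chord estimate $\int_k^{k+1} f \le \tfrac12\big(f(k)+f(k+1)\big)$, using convexity of $V^{1/\alpha}$ on each unit segment, and the lower bound comes from monotonicity of $f$ on either side of the mode, with each segment charged to its endpoint farther from $m$. Your extra remarks are accurate refinements rather than a different argument: the boundary terms you subtract in the upper bound are correct, and the hypothesis $|\Delta|\ge 2$ is indeed never used.
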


\begin{proof}
Let $m \in \Z$ be a mode of $X$. Then $\{f(k)\}$ is non-increasing for $k \geq m$ and is non-decreasing for $k \leq m$. Hence
$$
\int_k^{k+1} f(x)\,dx \, \geq \, f(k+1) \quad {\rm for} \ k \geq m, \qquad
\int_{k-1}^k f(x)\,dx \, \geq \, f(k-1) \quad {\rm for} \ k \leq m.
$$
Summing over all $k$, we get
$$
\int_{-\infty}^\infty f(x)\,dx \geq 1 - f(m).
$$
Also, since $V$ is linear on each interval $[k,k+1] \subset \Delta$, 
$f(x)$ is convex, and therefore its graph lies below the segment
connecting the points $(k,f(k))$ and $(k+1,f(k+1))$. Hence
$$
\int_k^{k+1} f(x)\,dx \leq \frac{f(k) + f(k+1)}{2}.
$$
Summing over all $k$, we obtain the desired result.
\end{proof}

Similar bounds can be derived for higher-order moments of arbitrary discrete unimodal random variables. For our purpose, we state the result only for discrete $\alpha$-concave random variables.

\begin{proposition}\label{moments}
Let $X$ be a discrete $\alpha$-concave random variable with density $f(k)$ with mode $m$. If $f(m) \leq 1/2$, then the $\alpha$-piecewise linear extension $f(x)$ satisfies
\begin{equation}\label{moments-eq}
   \frac{\E[|X-m|^p]}{p+1} \leq \int_{\R} |x-m|^p f(x)\,dx \, \leq \frac{2^{p+1}}{p+1} \E[|X-m|^p],
\end{equation}
the lower bound being also true when $f(m) \geq 1/2$.
\end{proposition}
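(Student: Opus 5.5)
The proof is a term-by-term comparison on the unit intervals $[k,k+1]$. It mirrors the proof of Proposition \ref{two_sided_bound_mode}, with the weight $|x-m|^p$ inserted. Translating, we may assume $m=0$.

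Recall the structure of $f$. Since $V=f^\alpha$ is linear on each $[k,k+1]\subset\Delta$ and $\alpha<0$, $f$ is monotone on each such interval: non-increasing for $k\ge 0$ and non-decreasing for $k\le -1$. It is also convex there, being a negative power of a positive affine function. We split $\int_{\R}|x|^p f(x)\,dx$ into $\int_0^\infty$ and $\int_{-\infty}^0$ and treat the positive side; the negative side follows by the reflection $x\mapsto -x$.

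\emph{Lower bound.} For $k\ge 1$, monotonicity on $[k-1,k]$ gives $f(x)\ge f(k)$ there. Hence
\[
\int_{k-1}^{k} x^p f(x)\,dx \;\ge\; f(k)\int_{k-1}^k x^p\,dx \;=\; f(k)\,\frac{k^{p+1}-(k-1)^{p+1}}{p+1}.
\]
By convexity of $t\mapsto t^{p+1}$, i.e.\ the mean value theorem, $k^{p+1}-(k-1)^{p+1}\ge k^p$ fails in general. Instead we use $k^{p+1}-(k-1)^{p+1}\ge k^p$ for $p\ge 0$, since the derivative $(p+1)t^p\ge t^p$ and, more crudely, $\int_{k-1}^k (p+1)t^p\,dt\ge \int_{k-1}^k t^p\,dt$. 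A cleaner route is
\[
\int_{k-1}^k x^p\,dx\;\ge\;\frac{k^{p+1}-(k-1)^{p+1}}{p+1}\;\ge\;\frac{k^{p}}{p+1},
\]
which follows from $k^{p+1}-(k-1)^{p+1}\ge k^{p+1}-(k-1)k^p=k^p$. This yields $\int_{k-1}^k x^pf\ge f(k)k^p/(p+1)$. Summing over $k\ge1$ and over the symmetric negative side gives $\E|X|^p/(p+1)$, since the $k=0$ term contributes nothing. This part uses no hypothesis on $f(m)$, which matches the statement.

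\emph{Upper bound.} On $[k,k+1]$ with $k\ge0$ we use $f(x)\le f(k)$ and $x\le k+1$. This gives
\[
\int_k^{k+1}x^pf\le f(k)\,\frac{(k+1)^{p+1}-k^{p+1}}{p+1}\le f(k)\,\frac{2^{p+1}k^{p+1}}{p+1}\qquad\text{for } k\ge1.
\]
That estimate is too lossy in the power of $k$. It is better to use $(k+1)^{p+1}-k^{p+1}\le (p+1)(k+1)^p\le (p+1)2^pk^p$, which gives
\[
\int_k^{k+1}x^pf\le 2^pf(k)k^p.
\]
Convexity of $f$ (the chord bound from Proposition \ref{two_sided_bound_mode}) instead gives $\int_k^{k+1}x^pf\le (k+1)^p\,\frac{f(k)+f(k+1)}2$, which distributes mass to both endpoints and is the preferable form. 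The main obstacle is the boundary interval $[0,1]$: there $x^p\le1$ while the weight $|k|^p$ at $k=0$ vanishes, so we must absorb $\int_0^1 x^pf\le \int_0^1 x^p\,\frac{f(0)+f(1)}{2}$-type mass into $\E|X|^p$. This is where $f(0)\le 1/2$ enters. It guarantees $\P(X\ne0)\ge1/2\ge f(0)$, so that $f(0)\le \sum_{k\ne0}f(k)\le \E|X|^p$. The combined estimate then reads
\[
\int_{\R}|x|^pf(x)\,dx\;\le\;\frac{1}{p+1}\Bigl(f(0)+\sum_{k\ne0}(|k|+1)^{p+1}f(k)\Bigr)\cdot\text{(const)}.
\]
Finally we use $(|k|+1)\le 2|k|$ for $|k|\ge1$ and track constants; the target is $2^{p+1}/(p+1)$, and the bookkeeping at $k=0$ is the delicate step.
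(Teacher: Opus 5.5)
Your lower bound is correct, and it is the paper's argument up to an index shift. Monotonicity of $f$ on each unit interval gives $f\ge f(k)$ on $[k-1,k]$ for $k\ge 1$, and then $\int_{k-1}^k x^p\,dx\ge k^p/(p+1)$. Your justification via $(k-1)^{p+1}\le (k-1)k^p$ is cleaner than the paper's binomial expansion, since it holds for every real $p\ge 0$ and not just integer $p$. Delete the two sentences before it, though. The first declares the inequality false and the next one uses it. The comparison $\int_{k-1}^k (p+1)t^p\,dt\ge\int_{k-1}^k t^p\,dt$ only bounds $k^{p+1}-(k-1)^{p+1}$ below by a quantity that is itself at most $k^p$, so it proves nothing.

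The upper bound, however, is not proved.
\begin{itemize}
\item Your final display has an unspecified constant and the weight $(|k|+1)^{p+1}$, which is of order $|k|^{p+1}$ (the loss you flagged yourself). So it controls $\E|X|^{p+1}$, not $\E|X|^p$, and for $\alpha<0$ the former can be infinite while the latter is finite.
\item The mean-value bound $\int_k^{k+1}x^pf\le 2^pf(k)k^p$ puts the coefficient $2^p$ on $\E|X|^p$, which already exceeds $\frac{2^{p+1}}{p+1}$ when $p>1$.
\item The chord bound with $x^p\le(k+1)^p$ gives $f(0)+\frac{1+2^p}{2}\E|X|^p$. After using $f(0)\le\E|X|^p$, this exceeds $\frac{2^{p+1}}{p+1}$ for every $p>0$.
\end{itemize}

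The ingredient you are missing, which the paper uses, is to keep $\int_k^{k+1}x^p\,dx=\frac{(k+1)^{p+1}-k^{p+1}}{p+1}$ exactly. Then apply the secant bound $(k+1)^{p+1}-k^{p+1}\le (2^{p+1}-1)k^p$ for $k\ge1$; the slope of $s\mapsto(1+s)^{p+1}$ over $[0,1/k]$ is largest at $k=1$. This leaves a slack of $\frac{1}{p+1}\E|X|^p$, into which the boundary term is absorbed via $f(0)\le\frac12\le\P(X\neq0)\le\E|X|^p$.

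Your suspicion about $k=0$ is justified. With $f\le f(0)$ on both $[-1,0]$ and $[0,1]$, the boundary contributes $\frac{2f(0)}{p+1}$. The paper's displayed sum records $f(0)$ only once, and that slack is only guaranteed to absorb half of it.

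One way to close this is to use the chord bound $f(x)\le (k+1-x)f(k)+(x-k)f(k+1)$ on every interval, with its exact weights. Then $f(0)$ receives $\frac{2}{(p+1)(p+2)}$ in total and $f(\pm1)$ receives $\frac{2^{p+2}-2}{(p+1)(p+2)}$. These sum to $\frac{2^{p+2}}{(p+1)(p+2)}\le\frac{2^{p+1}}{p+1}$. For $k\ge2$, the weight of $f(\pm k)$ is at most $(k+1)^p$, or at most $k^p$ when $p\le1$ by Jensen, and this fits within the remaining budget $\bigl(\frac{2^{p+1}}{p+1}-\frac{2}{(p+1)(p+2)}\bigr)k^p$.
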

\begin{proof}
%Since $f$ is unimodal, it has a mode at $m$. In particular, $f(k) \leq f(k+1)$ for $k \leq m-1$, and $f(k+1) \leq f(k)$ for $k \geq m$.
By translation, assume that $m = 0$. Note that for $k \geq 0$,
$$ \int_k^{k+1} |x|^p f(x) dx \leq f(k) \int_k^{k+1} |x|^p dx = f(k) \frac{(k+1)^{p+1} - k^{p+1}}{p+1}, $$
and for $k \leq 0$,
$$ \int_{k-1}^{k} |x|^p f(x) dx \leq f(k) \int_{k-1}^{k} |x|^p dx = f(k) \frac{(|k|+1)^{p+1} - |k|^{p+1}}{p+1}. $$
Since for $k \geq 1$,
$$ (k+1)^{p+1} - k^{p+1} = \sum_{j=0}^{p+1} \binom{p+1}{j} k^j - k^{p+1} = \sum_{j=0}^{p} \binom{p+1}{j} k^j \leq k^p \sum_{j=0}^{p} \binom{p+1}{j} = k^p (2^{p+1} - 1), $$
after summing over all $k$ we deduce from above that
$$ \int_{\R} |x|^p f(x) dx \leq \frac{1}{p+1} \left( f(0) + \sum_{k \neq 0} |k|^p f(k) (2^{p+1} - 1) \right) = \frac{1}{p+1} \left( f(0) + (2^{p+1} - 1) \E[|X|^p] \right). $$
If $f(0) \leq \frac{1}{2}$, then necessarily 
$\E[|X|^p] \geq \P\{|X| \geq 1\} \geq \frac{1}{2} \geq f(0)$, so that
$$
\int_{\R} |x|^p f(x)\,dx \, \leq \frac{2^{p+1}}{p+1} \E[|X|^p].
$$
The lower bound is more interesting for application in this article. Note that for $k \geq 0$,
$$ (k+1)^{p+1} - k^{p+1} = \sum_{j=0}^{p} \binom{p+1}{j} k^j \geq  \sum_{j=0}^{p} \binom{p}{j} k^j = (k+1)^p. $$
Therefore, for $k \geq 0$,
$$ \int_k^{k+1} |x|^p f(x) dx \geq f(k+1) \int_k^{k+1} |x|^p dx = f(k+1) \frac{(k+1)^{p+1} - k^{p+1}}{p+1} \geq f(k+1) \frac{(k+1)^p}{p+1}. $$
For $k \leq 0$, 
$$ \int_{k-1}^k |x|^p f(x) dx \geq f(k-1) \int_{|k|}^{|k|+1} x^p dx = f(k-1) \frac{(|k|+1)^{p+1} - |k|^{p+1}}{p+1} \geq f(k-1) \frac{(|k|+1)^p}{p+1}. $$
Summing over all $k$ leads to
$$ \int_{\R} |x|^p f(x) dx \geq \sum_{k \geq 0} f(k+1) \frac{(k+1)^p}{p+1} + \sum_{k \leq 0} f(k-1) \frac{(|k|+1)^p}{p+1} = \frac{\E[|X|^p]}{p+1}. $$
Translating back to an arbitrary mode $m$, we obtain
$$ \int_{\R} |x-m|^p f(x) dx \geq \frac{\E[|X-m|^p]}{p+1}. $$
\end{proof}

\begin{remark}\label{rem-moments}
    Contrary to the log-concave case, not all moments of $\alpha$-concave probability density functions or of discrete $\alpha$-concave random variables, are necessarily finite when $\alpha<0$. In both settings, moments are finite up to an order determined by the corresponding concavity parameter. We first recall the argument in the continuous setting (see \cite{Bobkov, FGP}) and include the details for completeness.
\end{remark}

 Let $s>0$, and let $\Bar{X}$ be a random variable with a $(-1/s)$-concave density $\bar{f}$. Then there exists a convex function $V \colon \mathbb{R} \to [0, \infty]$ such that $\bar{f}=V^{-s}$. By the integrability of $\bar{f}$, necessarily $V(x) \to \infty$ as $|x| \to \infty$. Therefore, by convexity of $V$, there exists $c>0$ such that $V(x) \geq c (1+|x|)$. Equivalently, $\bar{f}(x) \leq c (1+|x|)^{-s}$. Thus, for $p>0$, 
$$ \mathbb{E}[|\Bar{X}|^p] = \int_{-\infty}^\infty |x|^p \bar{f}(x) \,dx \leq c\, \int_{-\infty}^\infty |x|^p (1+|x|)^{-s} \,dx = 2c \int_0^\infty x^p (1 + x)^{-s}\,dx. $$ 
The last integral can be rearranged as $\int_0^\infty x^{a-1} (1+x)^{-a-b}\, dx$, where $a = p+1$ and $b = s - p-1$. But this represents a Beta function $B (a,b)$, which converges if $a,b>0$. Therefore, $\mathbb{E}[|\Bar{X}|^p]< \infty$ if $p< s-1$. 

\vskip2mm
\noindent
Returning to the discrete setting, let $X$ be a discrete $\alpha$-concave random variable with $\alpha<0$. By Proposition \ref{alpha_piecewise_construction} its $\alpha$-piecewise linear extension is also $\alpha$-concave. Therefore, its $p$-th absolute moment is finite whenever $\alpha > -1/(p+1)$. Proposition \ref{moments} then implies that $\mathbb{E}[|X|^p]< \infty$ for the same $\alpha$ threshold.

\vskip2mm
\noindent
Note that this threshold is consistent with the fact that log-concave random variables have finite moments of all orders: as $\alpha\to0^-$, the threshold $p^\star = -1/\alpha - 1 \to \infty$.

\vskip2mm
In analogy with the log-concave case, Propositions \ref{two_sided_bound_mode}-\ref{moments} allow us to extend the second-moment and variance inequalities obtained in \cite{BMM} to the class of discrete $\alpha$-concave random variables.

\begin{proposition}\label{continuous_max_second_moment}

Let $\alpha \in (-1/3,0)$. If $\bar{X}$ is a continuous random variable with an $\alpha$-concave density with mode at the point $m$, then
\begin{equation*}
    \dfrac{c(\alpha)}{2} M^2(\bar{X})\,\mathbb{E}[(\bar{X}-m)^2] \leq 1\,,
\end{equation*}
    where $c(\alpha) = \dfrac{(1+2 \alpha)(1+3\alpha)}{{(1+\alpha)^2}}$.
    
\end{proposition}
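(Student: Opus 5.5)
The plan is to reduce to one-sided statements on each side of the mode and compare each side with an $\alpha$-affine extremal density via Theorem \ref{majorization}. By translation and scaling, assume $m=0$ and $M(\bar X)=\bar f(0)=1$; the inequality then reads $\int_{\R} x^2 \bar f(x)\,dx \leq 2/c(\alpha)$ under $\int \bar f=1$. Write $s=-1/\alpha>3$. On $[0,\infty)$ set $p=\int_0^\infty \bar f$, and similarly $q=1-p$ on $(-\infty,0]$. It then suffices to show, for a nonincreasing $\alpha$-concave function $h$ on $[0,\infty)$ with $h(0)=1$ and $\int_0^\infty h = p$,
\begin{equation*}
\int_0^\infty x^2 h(x)\,dx \leq C_\alpha\, p^3
\end{equation*}
for the optimal constant $C_\alpha$. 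Indeed, $p^3+q^3\leq (p+q)^3=1$, so summing the two sides gives $\int x^2\bar f \leq C_\alpha$. The task is then to check that $C_\alpha = 2/c(\alpha)$.

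For the one-sided extremal problem, take the comparison function $g(x)=(1+x/\beta)^{-s}$ on $[0,\infty)$, i.e. $g^{\alpha}$ affine, with $g(0)=1$ and $\beta$ chosen so that $\int_0^\infty g = \beta/(s-1)=p$. Since $h^{\alpha}$ is convex, $g^{\alpha}$ is affine, and both equal $1$ at $0$, the difference $h^{\alpha}-g^{\alpha}$ is convex and vanishes at $0$. It therefore changes sign at most once on $(0,\infty)$, first $\le 0$ and then $\ge 0$. Because $\alpha<0$, this means $h\ge g$ near $0$ and $h\le g$ afterwards (on the region where $h$ vanishes we still have $h\le g$).

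Equal masses, together with this single crossing, force $\int x h\le\int x g$; this is the standard cut-off lemma applied to $\int_0^\infty (g-h)(x)\,x\,dx$, comparing with the crossing point $x_0$. The sign pattern is exactly the hypothesis of Theorem \ref{majorization} with $I=[0,x_0]$, $f=h$ and $\mu$ Lebesgue. Applying Theorem \ref{majorization} with the convex nondecreasing function $\varphi(x)=x^2$ on $(0,\infty)$ gives $\int x^2 h \leq \int x^2 g$.

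It remains to compute the right-hand side:
\begin{equation*}
\int_0^\infty x^2 (1+x/\beta)^{-s}dx = \beta^3 B(3,s-3) = \frac{2\beta^3}{(s-1)(s-2)(s-3)}.
\end{equation*}
With $\beta=(s-1)p$ this equals $\frac{2(s-1)^2}{(s-2)(s-3)}\,p^3$. Substituting $s=-1/\alpha$ gives $(s-1)^2=(1+\alpha)^2/\alpha^2$ and $(s-2)(s-3)=(1+2\alpha)(1+3\alpha)/\alpha^2$, so $C_\alpha = 2/c(\alpha)$, which is exactly the claim. Finiteness of all integrals uses $s>3$, which is where the hypothesis $\alpha>-1/3$ enters.

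The main obstacle is the single-crossing step. One must justify that $h^{\alpha}-g^{\alpha}$, which may be $+\infty$ past the support of $h$, yields the precise sign pattern required, and that the first-moment inequality holds rather than only mass equality. I would handle this by working with the extended convex function $V=h^{\alpha}\in(0,\infty]$ and the explicit crossing point $x_0$.
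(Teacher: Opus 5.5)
Your proposal is correct and follows essentially the same route as the paper: normalize to $m=0$, $\bar f(0)=1$; compare each half-line with the $\alpha$-affine profile having the same mass and the same value at the mode; use the single crossing of the convex $\bar f^{\alpha}$ with the affine $\bar g^{\alpha}$ to get the first-moment comparison; apply Theorem \ref{majorization} with $\varphi(x)=x^2$; compute $\frac{2(1+\alpha)^2}{(1+2\alpha)(1+3\alpha)}\,p^3$ on each side; and conclude with $p^3+q^3\le (p+q)^3=1$. The only cosmetic difference is that you derive $\int x h\le \int x g$ from $\int (g-h)(x)(x-x_0)\,dx\ge 0$ rather than from tail domination plus the layer-cake formula, and the two are equivalent.
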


As an immediate consequence, we have the following.

\begin{proposition} \label{sub_optimial_max-variance}

    Let $\alpha \in (-1/3,0)$. If $X$ is discrete $\alpha$-concave with mode at $m$, then 
    \begin{equation} \label{sub_optimial_bound}
    \dfrac{c(\alpha)}{6} M^2(X)\, \mathbb{E}[(X-m)^2] \leq 1\,,\end{equation}
    where $c(\alpha) = \dfrac{(1+2 \alpha)(1+3\alpha)}{{(1+\alpha)^2}}$. In particular,
    $$\dfrac{c(\alpha)}{6} M^2(X)\, \Var(X) \leq 1\,.$$
    
    \end{proposition}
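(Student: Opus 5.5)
The plan is to transfer Proposition \ref{continuous_max_second_moment} from the continuous setting to $\Z$ through the $\alpha$-piecewise linear extension. Let $f$ be the probability mass function of $X$, $m$ a mode, and $f(x)$, $x\in\R$, its extension from Proposition \ref{alpha_piecewise_construction}. This function is $\alpha$-concave. Its maximum over $\R$ is $f(m)=M(X)$: on each $[k,k+1]$ the function $V=f^\alpha$ is linear, so $f$ is monotone there and its values lie between $f(k)$ and $f(k+1)$. Hence $m$ is also a mode of $f(x)$.

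First I dispose of the degenerate cases. If $M(X)\ge 1/2$, I would argue directly. The distribution is unimodal, so $\E[(X-m)^2]$ must be controlled. One route is to use the lower bound in Proposition \ref{moments}, which holds regardless of the value of $f(m)$, together with the normalization step below. Since that bound is valid in both regimes, I expect no genuine case split, apart from $|\Delta|<2$, which is a point mass where the inequality is trivial.

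Next I normalize. Set $Z=\int_\R f(x)\,dx$. By Proposition \ref{two_sided_bound_mode}, $Z\le 1$ (and $Z>0$). Then $\bar f=f/Z$ is an $\alpha$-concave probability density with mode $m$ and maximum $M(X)/Z$. Apply Proposition \ref{continuous_max_second_moment} to a random variable $\bar X$ with density $\bar f$:
\[
\frac{c(\alpha)}{2}\,\frac{M(X)^2}{Z^2}\cdot\frac{1}{Z}\int_\R (x-m)^2 f(x)\,dx\le 1 .
\]
Since $Z\le1$, this gives $\frac{c(\alpha)}{2}M(X)^2\int_\R (x-m)^2 f(x)\,dx\le 1$.

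Finally, the lower bound in Proposition \ref{moments} with $p=2$ gives $\int_\R (x-m)^2 f(x)\,dx\ge \E[(X-m)^2]/3$. Substituting yields \eqref{sub_optimial_bound}. The variance statement follows from $\Var(X)\le \E[(X-m)^2]$, because the variance minimizes $\E[(X-a)^2]$ over $a$.

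The only step needing care is checking that the normalized extension satisfies the hypotheses of Proposition \ref{continuous_max_second_moment}. Specifically, $\bar f$ must be a genuine $\alpha$-concave density whose maximum is attained at $m$ and equals $M(X)/Z$. Positive scaling preserves $\alpha$-concavity, and the maximum was identified above, so this step should be routine. Finiteness of the second moment is guaranteed because $\alpha>-1/3$, as discussed in Remark \ref{rem-moments}.
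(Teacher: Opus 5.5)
Your proof is correct and follows the paper's argument. You normalize the $\alpha$-piecewise linear extension using $\int_\R f\le 1$ from Proposition \ref{two_sided_bound_mode}, apply Proposition \ref{continuous_max_second_moment}, and finish with the lower bound $\int_\R (x-m)^2 f(x)\,dx\ge \E[(X-m)^2]/3$ from Proposition \ref{moments}. One small inaccuracy: $|\Delta|<2$ also covers two-point supports $\{k,k+1\}$ (where $|\Delta|=1$), not only point masses. In that case the convexity bound $\int_k^{k+1} f\le (f(k)+f(k+1))/2$ still gives $\int_\R f\le 1$, so your argument goes through unchanged.
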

    
Note that as $\alpha \to 0^{-}$, $c(\alpha)$ approaches $1$, recovering \cite[Proposition 6.2]{BMM} in the log-concave setting.

\begin{proof}[Proof of Proposition \ref{sub_optimial_max-variance}]
    Denote by $f$ the probability mass function of $X$. Note that its $\alpha$-piecewise linear extension $f(x)$ is $\alpha$-concave. Let $\bar{X}$ be a continuous random variable with density $\bar{f}(x) = \dfrac{1}{C}f(x)$, where $C = \int_{\mathbb{R}}f(x)\, dx$. It follows from Proposition \ref{two_sided_bound_mode} that $C \leq 1$. Moreover, $$M(\Bar{X}) = \dfrac{f(m)}{C}\,\,\, \text{and}\,\,\, \mathbb{E}[(\Bar{X}-m)^2] = \dfrac{1}{C} \displaystyle \int_{\mathbb{R}}(x-m)^2f(x)\,dx.$$ Hence, by Proposition \ref{continuous_max_second_moment} applied to $\bar{f}$, we get
    $$ \dfrac{c(\alpha)}{2} f(m)^2 \int_{\mathbb{R}} (x-m)^2 f(x)\,dx \leq 1. $$
  We conclude since $\dfrac{\mathbb{E}[(X-m)^2]}{3} \leq \int_{\mathbb{R}}(x-m)^2 f(x)\,dx$, due to Proposition \ref{moments}.
\end{proof}

Let us now turn to the proof of Proposition \ref{continuous_max_second_moment}, which is based on majorization in the convex order.
%The idea is to adapt the single crossing argument used in the proof of \cite[Proposition 6.1]{BMM} to our setting.

\begin{proof}[Proof of Proposition \ref{continuous_max_second_moment}] Let us denote by $\bar{f}$ the probability density function of $\bar{X}$. By translation and scaling invariance, we may assume that $m=0$ and $\bar{f}(0) = 1$. Writing $\bar{f} = V^{1/\alpha}$ for some convex function $V$ on $\mathbb{R}$, we then have $V(0) =  1$. Let $a = \P(\Bar{X}<0)$ and $b = \P(\Bar{X}>0)$, and assume without loss of generality that $a,b > 0$. Define the function $\bar{V}$ such that
\[\bar{V}(x) = \begin{cases} 
    1 +u x, &  \mbox{ if } x \geq 0\\       
     1 - vx,  & \mbox{ if } x<0
     \end{cases}
\] 
and let $\bar{g} = \bar{V}^{1/\alpha}$, where $u,v >0$ are such that $\displaystyle \int_{-\infty}^0 \bar{g} = a$ and $\displaystyle \int_0^{\infty} \bar{g} = b$. Note that by construction, $\bar{g}$ is also a probability density function and its restriction to $[0,+\infty)$ is $\alpha$-affine. Since on $[0,+\infty)$ the function $V$ is convex, $\bar{V}$ is affine and $V(0)=\bar{V}(0)$, then the sign of $V-\Bar{V}$ can change from negative to positive at most once. Moreover, since $\displaystyle \int_0^\infty \bar{f}(x)\,dx = \displaystyle \int_0^\infty \Bar{g}(x)\,dx $, this sign change must occur unless $V = \bar{V}$. Thus for some $x_0>0$, $V(x) \leq \bar{V}(x)$ for $0\leq x<x_0$ and $V(x) \geq \bar{V}(x)$ for $x>x_0$. Equivalently, $\bar{f} \geq \bar{g}$ for $x<x_0$ and $\bar{f} \leq \bar{g}$ for $x>x_0$. Denoting by $\bar{Z}$ a random variable with density $\bar{g}$, the above one-crossing property necessarily implies that for all $t \geq 0$, 
$$ \P(\bar{Z} \geq t) \geq \P(\bar{X} \geq t). $$
As a consequence, by the layer-cake representation,
$$ \int_0^\infty x \bar{f}(x) d\mu(x) \leq \int_0^\infty x \bar{g}(x) d\mu(x), $$
where $\mu$ is the Lebesgue measure. Applying Theorem \ref{majorization} with $\mu$ as the Lebesgue measure yields 
$$ \int_0^\infty \varphi(x) \bar{f}(x) d\mu(x) \leq \int_0^\infty \varphi(x) \bar{g}(x) d\mu(x), $$
for any nondecreasing convex function $\varphi$. By taking $\varphi(x) = x^2$, which is nondecreasing on $[0, +\infty)$, we have
$$  \int_0^{\infty} x^2 \bar{f}(x) dx \leq \int_0^{\infty} x^2 \bar{g}(x) dx. $$
Applying the same argument on $(-\infty, 0]$ gives $\int_{-\infty}^0 x^2 \bar{f}(x) \,dx \leq \int_{-\infty}^0 x^2 \bar{g}(x) \,dx $. Therefore,
\begin{equation*}
\E[\bar{X}^2] = \int_{-\infty}^\infty x^2 \bar{f}(x) \, dx \leq \int_{-\infty}^\infty x^2 \bar{g}(x)\,dx.
\end{equation*}
It remains to evaluate the last integral. First, we find constants $u$ and $v$ in terms of $a,b$ and $\alpha$.
$$ \int_{-\infty}^0 \bar{g}(x)\,dx = \int_{-\infty}^0 (1-vx)^{1/\alpha}  = -\dfrac{\alpha}{v(\alpha+1)} = a \implies v = -\dfrac{\alpha}{a(\alpha+1)}. $$
Similarly, it can be shown that, $u =- \dfrac{\alpha}{b(\alpha+1)}$. Finally, 
 $$ \int_{-\infty}^\infty x^2 \bar{g}(x)\,dx =  \int_{-\infty}^0 x^2 \left(1+ \dfrac{\alpha}{a(\alpha+1)}x\right)^{1/\alpha}\,dx+  \int_0^\infty x^2 \left(1- \dfrac{\alpha}{b(\alpha+1)}x\right)^{1/\alpha}\,dx, $$
 which can be shown to be equal to $\dfrac{2(1 +\alpha)^2}{(1+ 2\alpha) (1+ 3\alpha)} \,(a^3 + b^3)$. Hence
 $$ \E[\Bar{X}^2] \leq \dfrac{2(1 +\alpha)^2}{(1+ 2\alpha) (1+ 3\alpha)} \,(a^3 + b^3) \leq \dfrac{2(1 +\alpha)^2}{(1+ 2\alpha) (1+ 3\alpha)} \,(a + b)^3 = \dfrac{2(1 +\alpha)^2}{(1+ 2\alpha) (1+ 3\alpha)}. $$
Translating back to an arbitrary mode $m$, we obtain
$$ M(\Bar{X})^2\, \E(\Bar{X}-m)^2 \leq \dfrac{2(1 +\alpha)^2}{(1+ 2\alpha) (1+ 3\alpha)}. $$
\end{proof}

The second-moment inequality \eqref{sub_optimial_bound} was obtained by transferring the corresponding continuous estimates to the discrete setting. In the following section, we use the majorization argument that exploits the discrete structure more directly and leads to sharper bounds.

\section{Sharper variance bounds}\label{max-var}

We now use the majorization argument from the preceding section more directly to the discrete setting to sharpen the bound relating the maximum probability and the variance of a discrete $\alpha$-concave random variable. This is achieved through sharper control of the second moment about the mode. The main idea is to compare a discrete $\alpha$-concave distribution with a suitably chosen two-sided $\alpha$-affine distribution. 

\begin{definition}

We say that a distribution is a two-sided discrete $\alpha$-affine distributions if its probability mass function is of the form
\begin{equation*}
  g(k) =
    \begin{cases}
      C(1 + \lambda_1 k)^{1/\alpha} & k \geq 0 \\
      C(1 - \lambda_2 k)^{1/\alpha} & k \leq 0
    \end{cases},
\end{equation*}
with $C, \lambda_1, \lambda_2 > 0$.
    
\end{definition}

Such distributions play the role of extremals for the majorization argument, as shown by the following proposition.

\begin{proposition}\label{sharp-Z}

Let $\alpha \in (-1/3 , 0)$. Let $X$ be a discrete $\alpha$-concave random variable on $\Z$ with mode at 0. Then, there exists a two-sided discrete $\alpha$-affine random variable $Z$ such that $M(Z) = M(X)$ and for any non-decreasing function $\Psi$,
$$ \Psi(\E[X^2]) \leq \Psi(\E[Z^2]). $$
    
\end{proposition}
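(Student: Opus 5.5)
We mimic the proof of Proposition \ref{continuous_max_second_moment}, now working directly on $\Z$ with $\mu$ the counting measure on $\{0,1,2,\dots\}$ (and symmetrically on $\{0,-1,-2,\dots\}$). Write $f$ for the mass function of $X$, normalize $f(0)=M(X)$, and set $V=(f/M(X))^{\alpha}$. Then $V(0)=1$, and by Proposition \ref{alpha_piecewise_construction} $V$ is convex on $\Delta$, equal to $+\infty$ outside it. Let $a=\P(X<0)$ and $b=\P(X>0)$, so $M(X)+a+b=1$. We look for $Z$ with mass function $g(k)=M(X)(1+\lambda_1 k)^{1/\alpha}$ for $k\ge 0$ and $g(k)=M(X)(1-\lambda_2 k)^{1/\alpha}$ for $k\le 0$. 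This gives $M(Z)=M(X)$, since $g$ is maximal at $0$.

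\textbf{Step 1: choice of $\lambda_1,\lambda_2$.} Choose $\lambda_1>0$ with $\sum_{k\ge1}M(X)(1+\lambda_1k)^{1/\alpha}=b$, and $\lambda_2$ similarly with $a$. The map $\lambda\mapsto\sum_{k\ge1}(1+\lambda k)^{1/\alpha}$ is continuous and strictly decreasing. It tends to $0$ as $\lambda\to\infty$, and to $+\infty$ as $\lambda\to0^+$ because $1/\alpha<-3$ makes each term tend to $1$. So such a $\lambda_1$ exists by the intermediate value theorem, and $g$ is a probability mass function.

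The cases $b=0$ or $a=0$ need separate treatment, since then the corresponding side of $Z$ should be degenerate and $\lambda_i$ cannot be positive. Either the definition is relaxed to allow a one-sided $Z$ with $\lambda_i=\infty$, or this case is handled by a limit. I expect the paper to simply assume $a,b>0$ without loss of generality.

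\textbf{Step 2: single crossing on $k\ge0$.} Here $V$ is convex, $\bar V(k)=1+\lambda_1k$ is affine, and $V(0)=\bar V(0)$. Hence $V-\bar V$ on $\{k\ge 0\}$ changes sign from $\le 0$ to $\ge 0$ at most once; this includes $V=\infty$ beyond the support. Since $\alpha<0$, this means $f\ge g$ on $[0,x_0)$ and $f\le g$ beyond. Because $\sum_{k\ge0}f=\sum_{k\ge0}g$, the tails satisfy $\P(Z\ge t)\ge\P(X\ge t)$ for all $t$. Summing the tails gives $\sum_{k\ge0}kf(k)\le\sum_{k\ge0}kg(k)$.

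Theorem \ref{majorization} then applies with $\mu$ the counting measure, $I=[0,x_0]$, and $\varphi(x)=x^2$, giving $\sum_{k\ge0}k^2f(k)\le\sum_{k\ge0}k^2g(k)$. The required finiteness of the integrals holds because $\E[Z^2]<\infty$ when $\alpha>-1/3$: the terms decay like $k^{2+1/\alpha}$, and $2+1/\alpha<-1$. The same argument on $k\le0$, with $\lambda_2$, handles the negative side. Adding the two sides gives $\E[X^2]\le\E[Z^2]$, and applying the non-decreasing $\Psi$ yields the claim.

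\textbf{Main obstacle.} The delicate point is the crossing argument in the discrete setting. One must check that "at most one sign change of a convex minus an affine function vanishing at $0$" really produces a single crossing for the sequences, including when $V=\infty$ outside a finite support. One must also check that equal masses force a genuine crossing, or else $f=g$ on that side. A second subtlety is the first-moment hypothesis of Theorem \ref{majorization}: it must be derived from the tail comparison through the discrete layer-cake identity $\sum_{k\ge0} k f(k)=\sum_{t\ge1}\P(X\ge t)$, rather than assumed.
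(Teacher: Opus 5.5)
Your proposal is correct and follows the paper's proof essentially step for step. It uses the same $g$ with $g(0)=f(0)$ and side-wise mass matching, the same convex-minus-affine single-crossing argument giving tail domination, the layer-cake identity to supply the first-moment hypothesis, and Theorem~\ref{majorization} with the counting measure and $\varphi(x)=x^2$ on each half-line. Your flag on the degenerate cases $a=0$ or $b=0$ is well taken: the paper tacitly assumes $a,b>0$ (for a point mass, no $Z$ with finite $\lambda_1,\lambda_2>0$ has $M(Z)=1$), but this is harmless for Theorem~\ref{maximum-vs-variance}, where a one-sided comparison or a trivial check covers those cases.
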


\begin{proof}
Let us denote by $f$ the probability mass function of $X$. Construct $g$, the probability mass function of $Z$, such that
\begin{equation*}
  g(k) =
    \begin{cases}
      f(0)(1 + \lambda_1 k)^{1/\alpha} & k \geq 0 \\
      f(0)(1 - \lambda_2 k)^{1/\alpha} & k \leq 0
    \end{cases},
\end{equation*}
where $\lambda_1, \lambda_2 > 0$ are such that $\sum_{k \geq 0} g(k) = \sum_{k \geq 0} f(k)$ and $\sum_{k \leq 0} g(k) = \sum_{k \leq 0} f(k)$. By construction $g(0) = f(0)$. Thus, on $[0, +\infty)$, there exists a convex functions $V$ and an affine function $W$ such that $f = V^{1/\alpha}$ and $g = W^{1/\alpha}$ on $\N = \{0,1,2, \dots \}$. Since $V$ is convex and $W$ is affine, and $V(0) = W(0)$, the functions $V$ and $W$ cross at most once on $[0, \infty)$. A zero-crossing would mean that either $V \leq W$ or $V \geq W$ on the whole $[0, \infty)$, but that would contradicts that $f$ and $g$ have same total mass. Therefore, there is a one-crossing, that is there exists an interval $[0,x_0]$, for some $x_0>0$, such that $W \geq V$ on $[0, x_0]$ and $W \leq V$ on $[x_0, \infty)$. Equivalently, $g \leq f$ on $[0, x_0]$ and $g \geq f$ on $[x_0, \infty)$. But this necessarily implies that for all $t \geq 0$, 
$$ \P(Z \geq t) \geq \P(X \geq t). $$
As a consequence, by the layer-cake representation,
$$ \int_0^\infty x f(x) d\mu(x) \leq \int_0^\infty x g(x) d\mu(x), $$
where $\mu$ is the counting measure. Applying Theorem \ref{majorization} with $\mu$ as the counting measure yields 
$$ \int_0^\infty \varphi(x) f(x) d\mu(x) \leq \int_0^\infty \varphi(x) g(x) d\mu(x), $$
for any nondecreasing convex function $\varphi$. By taking $\varphi(x) = x^2$, which is nondecreasing on $[0, +\infty)$, we have
$$  \int_0^{\infty} x^2 f(x) d\mu(x) \leq \int_0^{\infty} x^2 g(x) d\mu(x), $$
that is
$$ \sum_{k \geq 0} k^2 f(k) \leq \sum_{k \geq 0} k^2 g(k). $$
By applying the same argument to $f(-k)$ and $g(-k)$ on $\N$ gives
$$ \sum_{k \geq 0} k^2 f(-k) \leq \sum_{k \geq 0} k^2 g(-k), $$
which yields $\E[X^2] \leq \E[Z^2]$, and the desired result follows.  
\end{proof}

Proposition \ref{sharp-Z} reduces the problem to obtaining suitable second-moment estimates for $\alpha$-affine distributions. Let us first treat the one-sided case on $\mathbb{N} = \{0,1,2, \dots\}$.

\begin{lemma}\label{bound-extremal}

Let $\alpha \in (-1/3 , 0)$. Let $Z$ be a discrete $\alpha$-affine random variable on $\N$ with mode at 0, then
    \begin{equation}\label{alpha-affine}
        M(Z)^2 \left( 1 + \frac{1}{2}c(\alpha) \mathbb{E}[Z^2] \right) \leq 1,
    \end{equation}
where $c(\alpha) = \dfrac{ ( 1+ 2\alpha)(1 + 3\alpha)}{(1+\alpha)^2}$.

\end{lemma}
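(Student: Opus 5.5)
The plan is to reduce \eqref{alpha-affine} to an explicit one-parameter inequality by comparing the lattice sums of the $\alpha$-affine mass function with integrals of its continuous $\alpha$-affine extension. We can do the integrals in closed form, exactly as in the proof of Proposition \ref{continuous_max_second_moment}.

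\textbf{Setup.} Write $s=-1/\alpha>3$ and $g(k)=C(1+\lambda k)^{-s}$ for $k\in\N$, so that $C=M(Z)$. Let $h(x)=C(1+\lambda x)^{-s}$ on $[0,\infty)$; it is decreasing and convex. The normalization is $\sum_{k\ge0}h(k)=1$. A direct computation gives
$$\int_t^\infty h=\frac{C(1+\lambda t)^{1-s}}{\lambda(s-1)},\qquad \int_0^\infty x^2h(x)\,dx=\frac{2C}{\lambda^3(s-1)(s-2)(s-3)}.$$
Setting $A=C/(\lambda(s-1))=\int_0^\infty h$, the second integral equals $2A^3/(c(\alpha)C^2)$, because $(s-1)^2/((s-2)(s-3))=1/c(\alpha)$. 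The target inequality is
$$\E[Z^2]\le \frac{2(1-C^2)}{c(\alpha)C^2},$$
so the whole task is to show that the discrete second moment exceeds the continuous quantity by at most a correction that fits into $1-C^2$ in place of $A^3$.

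\textbf{Key steps.}

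1. Use the tail form $\E[Z^2]=\sum_{t\ge1}(2t-1)\P(Z\ge t)$ with $\P(Z\ge t)=\sum_{k\ge t}h(k)$. Since $h$ is decreasing, $\P(Z\ge t)\le h(t)+\int_t^\infty h$. Also $1=\sum_{k\ge0}h(k)\ge \int_0^\infty h$ and $1\le C+\int_0^\infty h$, so $A\in[1-C,1]$.

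2. Sharpen these estimates using convexity of $h$. By Hermite--Hadamard, $h(k)\le\int_{k-1/2}^{k+1/2}h$, which gives
$$\P(Z\ge t)\le \int_{t-1/2}^\infty h=\frac{C\,(1+\lambda(t-\tfrac12))^{1-s}}{\lambda(s-1)},$$
and, summing from $k=0$, $1\le \int_{-1/2}^\infty h$. The latter reads $C(1-\lambda/2)^{1-s}\ge\lambda(s-1)$, which is an upper bound on $\lambda$ in terms of $C$ when $\lambda<2$. If instead $\lambda\ge2$, then $C$ is close to $1$ and the second moment is tiny; I will treat that case by the crude bound $\E[Z^2]\le \sum_{k\ge1}k^2h(k)$ together with $1-C=\sum_{k\ge1}h(k)$.

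3. Compare $\sum_{t\ge1}(2t-1)\int_{t-1/2}^\infty h$ with an integral. The function $t\mapsto \int_{t-1/2}^\infty h$ is convex and decreasing, so the sum is bounded by $\int_{1/2}^\infty 2x\cdot\int_{x-1/2}^\infty h\;dx$ plus an explicit boundary term. This evaluates in closed form to a rational expression in $\lambda$, $C$ and $s$ of the type $\frac{2}{c(\alpha)C^2}A^3\,\phi(\lambda)$.

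4. Finally, eliminate $\lambda$ using the normalization from step 2 and prove the resulting scalar inequality in $(C,\lambda)$ for every $s>3$. This should amount to showing that the correction terms are dominated by $1-C^2-A^3\ge(1-A)(\dots)$, using $A\ge 1-C$ and $C\le1$.

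\textbf{Main obstacle.} I expect step 4, the final scalar inequality, to be the hardest part. The Hermite--Hadamard shifts must be chosen so that the error terms are of the right order in $C$ uniformly in $s$, and the extreme regimes need care. These are $C\to0$ (the continuous limit), where the bound must be asymptotically tight, and $C\to1$, where $\lambda$ is large. My first attempt is to verify the inequality as a function of the single variable $\lambda$ after substituting the exact normalization. If that proves intractable, I will fall back on splitting the sum at $k=0$ and treating $C$ and the tail $1-C$ separately, which is where the extra $1$ in $1+\tfrac12c(\alpha)\E[Z^2]$ should come from.
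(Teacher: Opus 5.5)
Steps 1--3 are fine. Step 3 even needs no boundary term: $T(x)=\int_{x-1/2}^\infty h$ is convex and $2x\ge 2t-1$ on $[t-\frac12,t+\frac12]$, so $\E[Z^2]\le\int_{1/2}^\infty 2xT(x)\,dx=\int_0^\infty(x^2+x)h(x)\,dx$, i.e.\ $\phi(\lambda)=1+\frac{(s-3)\lambda}{2}$. The problem is Step 4, which is the whole content of the lemma: it cannot be closed with the inequalities you propose.

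The goal $A^3\phi(\lambda)+C^2\le 1$, with $A=C/(\lambda(s-1))$, is increasing in $C$ at fixed $\lambda$. So you need an \emph{upper} bound on $C$, i.e.\ a \emph{lower} bound on $F=1/C=\sum_{k\ge0}(1+\lambda k)^{-s}$. Your normalization $1\le\int_{-1/2}^\infty h$ and the bound $A\ge 1-C$ are both lower bounds on $C$, so they point the wrong way. In particular, $A\ge1-C$ bounds $1-A$ from above, which cannot help bound $1-C^2-A^3$ from below.

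The only inequality you have in the right direction is $A\le 1$, i.e.\ $F\ge 1/r$ with $r=\lambda(s-1)$, and it is too weak. The lemma is equivalent to $c(\alpha)G\le 2(F^3-F)$ with $G=\sum_{k\ge1}k^2(1+\lambda k)^{-s}$. As $\lambda\to0$ one has $c(\alpha)G=\frac{2}{r^3}+O(1)$, while $F\ge\frac1r$ only yields $2(F^3-F)\ge\frac{2}{r^3}-\frac{2}{r}$.

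What is missing is the Euler--Maclaurin half-term, which comes from the \emph{trapezoid} side of convexity, not the midpoint side. The bound $\int_k^{k+1}h\le\frac{h(k)+h(k+1)}{2}$ gives $A\le 1-\frac C2$, i.e.\ $F\ge\frac1r+\frac12$; this is the paper's $S\ge\frac1r-\frac12$. With it, your Step 3 bound finishes the job whenever $C\le\frac45$: using $\lambda A=C/(s-1)$ and $\frac{s-3}{s-1}<1$, the goal reduces to $(1-\frac C2)^2+C^2\le 1$.

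The remaining regime, where $Z$ is close to a point mass, is where your case split at $\lambda=2$ fails. Closeness to a point mass is governed by $r=\lambda(s-1)$, not by $\lambda$, and $s=-1/\alpha$ is unbounded. Take $s=100$ and $\lambda=\frac12$. Then $1-C\approx 1.5^{-100}\approx 2.5\cdot 10^{-18}$, so the target allows $\E[Z^2]$ only up to about $10^{-17}$. Your Step 2 estimate $\mathbb{P}(Z\ge1)\le\int_{1/2}^\infty h=\frac{C(1+\lambda/2)^{1-s}}{\lambda(s-1)}$ is already about $5\cdot10^{-12}$. More generally, for every fixed $\lambda>0$ the ratio $\int_{1/2}^\infty h\big/h(1)$ grows exponentially in $s$.

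Thus for large $r$ no comparison with the continuous extension can work, even though $\lambda<2$; you must control $\E[Z^2]$ relative to $h(1)=C(1+\lambda)^{-s}$. This is what the paper does for $r\ge\frac{22}{13}$. It writes $G=(1+\lambda)^{-s}\sum_{k\ge0}(k+1)^2\bigl(1+\frac{\lambda}{1+\lambda}k\bigr)^{-s}$ and bounds this unimodal sum by its integral plus its maximum. This gives $c(\alpha)G<4(1+\lambda)^{-s}\le 4S\le 2S(S+1)(S+2)$ with $S=F-1$. For $r\le\frac{22}{13}$ it combines $S\ge\frac1r-\frac12$ with $c(\alpha)G\le\frac{2}{r^3}$, obtained from the Gamma representation and Lazarevi\'c's inequality.

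Finally, for $\lambda\ge2$ your ``crude bound'' $\E[Z^2]\le\sum_{k\ge1}k^2h(k)$ is just the definition of $\E[Z^2]$. The inequality $c(\alpha)\sum_{k\ge1}k^2(1+\lambda k)^{-s}\le 2S(S+1)(S+2)$ must still be proved there, uniformly as $s\to3^+$, where $c(\alpha)\to0$ while the sum blows up.
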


\begin{proof}
Since the mode of $Z \sim g$ is at 0, one may write $g(k) = g(0) (1 + \lambda k)^{1/\alpha}$, with $\lambda > 0$. Denote $\beta = - \frac{1}{\alpha}$. Note that $g(0) = 1/F(\lambda)$, where 
$$ F(\lambda) = \displaystyle \sum_{k=0}^\infty (1+ \lambda k)^{-\beta}, $$
and $\mathbb{E}[Z^2] = G(\lambda)/F(\lambda)$, where 
$$ G(\lambda) = \displaystyle \sum_{k=1}^\infty k^2\,(1+ \lambda k)^{-\beta}. $$

\vskip3mm
\noindent
After plugging in $F$ and $G$, the equation \eqref{alpha-affine} becomes
\begin{equation}\label{to-prove}
c(-1/\beta) G \leq 2(F^3 - F), \qquad \text{for $\lambda>0$.}
\end{equation}
Denoting $S(\lambda) = F(\lambda) - 1 = \displaystyle\sum_{k=1}^\infty (1 + \lambda\,k)^{-\beta}$, proving \eqref{to-prove} is equivalent to proving
    \begin{equation}\label{to-prove-2}
        c(-1/\beta) G \leq 2S(S+1)(S+2), \qquad \text{for $\lambda>0$.}
    \end{equation}
Let $r = (\beta-1)\lambda$. We start by obtaining an upper bound for $G$ and a lower bound for $S$ in terms of the parameter $r$. Using Gamma integral representation, one can write
    \begin{equation}\label{Gamma-rep}
    (1+ \lambda k)^{-\beta} =\dfrac{1}{\Gamma(\beta)} \int_0^\infty x^{\beta -1}e^{-x} e^{-\lambda kx}\,dx.
    \end{equation}
    By Tonelli's theorem,
    $$ G(\lambda) = \dfrac{1}{\Gamma(\beta)}\sum_{k=1}^\infty k^2\,  \int_0^\infty x^{\beta -1}e^{-x} e^{-\lambda kx}\,dx = \dfrac{1}{\Gamma(\beta)} \int_0^\infty x^{\beta -1}e^{-x} \sum_{k=1}^\infty k^2\,e^{-\lambda kx}\,dx. $$
    Using hyperbolic sine and cosine identities
    $$ \displaystyle \sum_{k=1}^\infty k^2\,e^{-uk} = \dfrac{e^{-u}(1+e^{-u})}{(1- e^{-u})^3} = \dfrac{\cosh{(u/2)}}{4\sinh^3{(u/2)}}. $$
    Recall the Lazarevi\'c's inequality $(\sinh(x)/x)^p \geq \cosh (x)$, holding for $x>0$ and $p \geq 3$ (see, e.g.\cite{Mit}). Therefore,
    $$
    \sum_{k=1}^\infty k^2\,e^{-uk} \leq 2/u^3, $$
  which in turn yields
  \begin{align*}
  G(\lambda) \leq \dfrac{1}{\Gamma(\beta)} \int_0^\infty x^{\beta-1} \,e^{-x}\cdot \frac{2}{\lambda^3 x^3}\,dx 
  = \dfrac{2}{\lambda^3} \dfrac{1}{\Gamma(\beta)}\cdot \Gamma(\beta-3)
  = \dfrac{2}{\lambda^3 (\beta-1)(\beta-2)(\beta-3)}.
   \end{align*}
  Multiplying both sides by $c(-1/\beta)$,
  \begin{equation}\label{R}
    c(-1/\beta) G(\lambda) \leq \dfrac{2}{\lambda^3 (\beta - 1)^3} = \frac{2}{r^3}. 
    \end{equation}
   Similarly, we obtain a lower bound for $S$ using the Gamma representation and Tonelli's,
   \begin{equation*}
       S(\lambda) = \dfrac{1}{\Gamma(\beta)} \int_0^\infty x^{\beta-1} e^{-x}\, \frac{1}{e^{\lambda x}-1}\, dx.
   \end{equation*}
  Using the elementary inequality $\dfrac{1}{e^u -1} \geq \dfrac{1}{u} - \dfrac{1}{2}$, for $u>0$, we obtain
   \begin{align}\label{S}
       S(\lambda) \geq \frac{1}{\lambda}\cdot \dfrac{1}{\Gamma(\beta)}\int_0^\infty x^{\beta - 2} e^{-x}\,dx - \frac{1}{2} = \dfrac{1}{\lambda (\beta-1)} - \dfrac{1}{2} = \dfrac{1}{r} - \dfrac{1}{2}.
   \end{align}
    The above estimates obtained for $G$ and $S$ will be good enough to handle the case when $r$ is small. However, when $r$ is large, sharper estimates will be needed. For this reason, we treat two regimes in $r$.

    \vskip2mm
    \noindent
    {\bf {Case 1:}  $r \leq \dfrac{22}{13}$}
   \vskip2mm \noindent
   Using the lower bound \eqref{S} obtained for $S$,
   $$ 2S (S+1) (S+2) \geq 2 \left( \frac{1}{r} - \frac{1}{2} \right) \left(\frac{1}{r} + \frac{1}{2} \right) \left(\frac{1}{r} + \frac{3}{2}\right). $$
   Therefore, recalling \eqref{R}, to prove \eqref{to-prove-2} it is enough to show that
   $$ 2 \left( \frac{1}{r} - \frac{1}{2} \right) \left(\frac{1}{r} + \frac{1}{2} \right) \left(\frac{1}{r} + \frac{3}{2}\right) - \frac{2}{r^3} \geq 0, \qquad \text{for $r \leq 22/13$}. $$
   After simplification, we are left with the inequality $r^2 + \frac{2}{3} r - 4 \leq 0$, which can easily be verified to hold when $r \leq 22/13$.

   \vskip2mm
    \noindent
    {\bf {Case 2:}  $r \geq \dfrac{22}{13}$}
   \vskip2mm \noindent
In this regime, a different upper bound for $G$ is needed, which is derived using a standard sum-integral estimate. Let 
\begin{equation}\label{h}
h(x) = (x+1)^2 \left(1 + \frac{\lambda}{1+\lambda} x \right)^{-\beta},
\end{equation}
and note that
\begin{equation}\label{G-2}
\dfrac{G(\lambda)}{(1+\lambda)^{-\beta}} = \sum_{k=1}^\infty k^2 \dfrac{(1 + \lambda k )^{-\beta}}{(1+\lambda)^{-\beta}}  = \sum_{k=0}^\infty h(k).
\end{equation}
One can easily check that $h$ is a unimodal function on $[0, \infty)$, therefore 
\begin{equation}\label{sum-integral}
\sum_{k \geq 0} h(k) \leq \int_0^\infty h(x) dx + \sup_{x \geq 0} h(x).
\end{equation}
Indeed, denoting by $m$ the point where $h$ takes its maximum value, one has
\begin{eqnarray*}
\int_0^\infty h(x) dx & \geq & \sum_{k = 0}^{\lfloor m \rfloor - 1} \int_k^{k+1} h(x)dx + \int_{\lfloor m \rfloor}^{\lfloor m \rfloor+1} h(x)dx + \sum_{k \geq \lfloor m \rfloor + 1} \int_k^{k+1} h(x)dx \\ & = & \sum_{k \geq 0} h(k) - h(\lfloor m \rfloor) - h(\lfloor m \rfloor + 1) + \int_{\lfloor m \rfloor}^{\lfloor m \rfloor+1} h(x)dx,
\end{eqnarray*}
with the convention that the summation $\sum_{k = 0}^{\lfloor m \rfloor - 1}$ is 0 if $m < 1$. Since
$$ \int_{\lfloor m \rfloor}^{\lfloor m \rfloor + 1} h(x)dx \geq h(\lfloor m \rfloor)(m - \lfloor m \rfloor) + h(\lfloor m \rfloor + 1)(\lfloor m \rfloor + 1 - m), $$
one deduces that
$$ \sum_{k \geq 0} h(k) \leq \int_0^\infty h(x) dx + h(\lfloor m \rfloor)(1 - m + \lfloor m \rfloor) + h(\lfloor m \rfloor + 1)(m - \lfloor m \rfloor) \leq \int_0^\infty h(x) dx + \sup_{x \geq 0} h(x). $$
The integral of $h$ can be evaluated directly, and one has
$$ \int_0^\infty h(x) dx = \frac{2(1+\lambda)^3 + 2\lambda(1+\lambda)^2(\beta-3)+\lambda^2 (1+\lambda)(\beta-2)(\beta-3)}{\lambda^3(\beta-1)(\beta-2)(\beta-3)}, $$
so that
$$ c(-1/\beta) \int_0^\infty h(x) dx =  \frac{1}{\lambda^3 (\beta - 1)^3}(1+ \lambda) (\lambda^2 \beta^2 - 3 \lambda^2 \beta + 2 \lambda^2 + 2 \lambda \beta - 2\lambda + 2). $$
Recalling that $r = \lambda (\beta-1)$, the right-hand side can be rewritten as
$$ \frac{1}{r^3} \left(1+ \dfrac{r}{\beta - 1}\right) \left(r^2 (1 - \dfrac{1}{\beta - 1}) + 2r + 2\right).$$
Denoting $u = \dfrac{1}{\beta - 1} < \frac{1}{2}$, we arrive at
\begin{align}\label{bound-int-h}
c(-1/\beta) \int_0^\infty h(x)\,dx & = \dfrac{(1+ ru) (r^2 (1-u) + 2r + 2))}{r^3} \nonumber \\ & = \dfrac{1}{r} + \dfrac{2}{r^2} + \dfrac{2}{r^3} + u \left(\dfrac{1}{r} + \dfrac{2}{r^2}\right) + u(1-u) \nonumber \\ & \leq \dfrac{1}{4} + \dfrac{3}{2r} + \dfrac{3}{r^2} + \dfrac{2}{r^3}.
\end{align}
It remains to upper bound $c(-1/\beta) \sup_{x \geq 0} h(x)$. Note that $h'(x) = 0$ at the point $x_0 = \dfrac{2 - \beta \lambda^\prime}{(\beta-2) \lambda^\prime}$, where $\lambda^\prime = \dfrac{\lambda}{1+\lambda}$. If $2 - \beta \lambda^\prime \leq 0$, then $\sup_{x \geq 0} h(x)$ is attained at $x=0$, and thus
$$ c(-1/\beta) \sup_{x \geq 0} h(x) = c(-1/\beta) \cdot h(0) = c(-1/\beta) \cdot 1 \leq 1. $$
Otherwise, $2 - \beta \lambda^\prime \geq 0$ and a direct computation gives
\begin{align*}
\sup_{x\geq 0} h(x) = h(x_0) = \dfrac{4 (1- \lambda^\prime)^{\beta-2} (\beta-2)^{2 - \beta}}{\lambda'^2\, \beta^\beta} = \dfrac{4}{\lambda^2} \left(\dfrac{1+\lambda}{\beta}\right)^\beta (\beta-2)^{\beta-2},
\end{align*}
and thus, recalling $r = \lambda(\beta - 1)$,
\begin{align*}
c(-1/\beta) \sup_{x\geq 0} h(x)  & =  \dfrac{4}{\lambda^2 (\beta-1)^2} \left(\dfrac{1+\lambda}{\beta}\right)^\beta (\beta-2)^{\beta-1}\cdot (\beta-3)\\
& = \dfrac{4}{r^2} \cdot \dfrac{\beta-3}{\beta- 2} \cdot \left( \dfrac{(1+\lambda) (\beta-2)}{\beta} \right)^\beta.
\end{align*}
Observe that the condition $2 - \beta \lambda^\prime \geq 0$ implies $\dfrac{(1+\lambda) (\beta-2)}{\beta} \leq 1$. Hence, 
\begin{equation*}
c(-1/\beta) \sup_{x\geq 0} h(x) \leq \dfrac{4}{r^2}.
\end{equation*}
Finally, we obtain
\begin{equation}
c(-1/\beta) \sup_{x\geq 0} h(x) \leq  \max \left\{ 1, \dfrac{4}{r^2}\right\} = 
  \begin{cases}\label{bound-h}
            \dfrac{4}{r^2} & \text{if}\,\,\, \dfrac{17}{10} \leq r <2\\
      
      1  &  \text{if}\,\,\, r \geq 2
   \end{cases}.
\end{equation}
Recalling \eqref{G-2}, if $\dfrac{22}{13} \leq r < 2$, then combining \eqref{bound-int-h} and \eqref{bound-h},
\begin{align*}
    c(-1/\beta) G(\lambda) \leq (1+\lambda)^{-\beta} \left(\dfrac{1}{4} + \dfrac{3}{2r} + \dfrac{7}{r^2} + \dfrac{2}{r^3} \right) < 4 (1+\lambda)^{-\beta},
\end{align*}
which implies \eqref{to-prove-2} since $4 (1+\lambda)^{-\beta} \leq 4S \leq 2S(S+1)(S+2)$. If $r \geq 2$, then \eqref{bound-int-h} and \eqref{bound-h} gives
\begin{align*}
    c(-1/\beta) G \leq (1+\lambda)^{-\beta} \left(\dfrac{5}{4} + \dfrac{3}{2r} + \dfrac{3}{r^2} + \dfrac{2}{r^3}  \right) < 4 (1+\lambda)^{-\beta},
\end{align*}
which again implies \eqref{to-prove-2}. This completes the proof of \eqref{to-prove-2} in all regimes.
\end{proof}
\begin{remark}\label{rem-sharp}
    The bound \eqref{alpha-affine} is asymptotically sharp, with equality attained asymptotically within the class of one-sided $\alpha$-affine distributions. Indeed, letting $\lambda \to 0^{+}$, one can obtain the following estimates using Riemann-sum approximation.
\end{remark}

$$ \lim_{\lambda \to 0^+} \sum_{k=0}^\infty \lambda (1  + \lambda k)^{-\beta} = \int_0^\infty (1+x)^{-\beta}\,dx = \frac{1}{\beta-1}, $$
$$ \lim_{\lambda \to 0^+} \sum_{k=0}^\infty \lambda \, (\lambda k)^2 (1 + \lambda k)^{-\beta} =  \int_0^\infty x^2(1+x)^{-\beta} \,dx = \dfrac{2}{(\beta - 1)(\beta -2)(\beta -3)}. $$
Thus,
$$ F(\lambda) = \sum_{k=0}^\infty (1  + \lambda k)^{-\beta} \sim_{\lambda \to 0^+} \frac{1}{\lambda(\beta-1)}, $$
$$ G(\lambda) =\sum_{k=1}^\infty k^2 (1 + \lambda k)^{-\beta} \sim_{\lambda \to 0^+} \dfrac{2}{\lambda^3 (\beta - 1)(\beta -2)(\beta -3)}. $$
Therefore, as $\lambda \to 0^{+}$, $M(Z)= \dfrac{1}{F(\lambda)} \to 0$ and 
$$ M(Z)^2 \mathbb{E}[Z^2] = \frac{G(\lambda)}{F(\lambda)^3} \sim \dfrac{2(\beta-1)^2}{(\beta-2)(\beta-3)} = \dfrac{2}{c(-1/\beta)}. $$
Consequently,
$$M(Z)^2 \left( 1 + \frac{1}{2}c(\alpha) \mathbb{E}[Z^2] \right)  \to 1.$$
Thus, the family of one-sided $\alpha$-affine distributions with $\lambda\to0^+$ forms an asymptotically extremizing family for equation \eqref{alpha-affine}.

\iffalse
\begin{align*}
F(\lambda) =1+ \sum_{k=1}^\infty  (1  + \lambda k)^{-\beta}& = 1 + \dfrac{1}{\lambda} \sum_{k=1}^\infty  (1  + \lambda k)^{-\beta}\cdot \lambda\\
&\to 1+ \dfrac{1}{\lambda} \int_0^\infty (1+x)^{-\beta}\,dx \\
&= 1 + \dfrac{1}{\lambda} \cdot \dfrac{1}{\beta-1}.
\end{align*}
 \begin{align*}    
      G(\lambda) =\sum_{k=1}^\infty k^2 (1 + \lambda k)^{-\beta} &= \dfrac{1}{\lambda^3} \sum_{k=1}^\infty (\lambda k)^2 (1 + \lambda k)^{-\beta}\cdot \lambda\\ &\rightarrow \dfrac{1}{\lambda^3}\int_0^\infty x^2(1+x)^{-\beta}\,dx\\
    & = \dfrac{2}{\lambda^3 (\beta - 1)(\beta -2)(\beta -3)}.
       \end{align*}
       Thus $F(\lambda) \sim \frac{1}{\lambda(\beta-1)}$ and $G(\lambda) \sim \frac{2}{\lambda^3 (\beta - 1)(\beta -2)(\beta -3)}.$
\fi

\vskip2mm

Finally, let us extend \eqref{alpha-affine} to two-sided discrete $\alpha$-affine distributions on $\mathbb{Z}$.

\begin{lemma}\label{bound-extremal-V}

Let $\alpha \in (-1/3 , 0)$. Let $Z$ be a two-sided discrete $\alpha$-affine distributions with mode at 0, then
    \begin{equation}\label{equation-V}
        M(Z)^2 \left( 1 + \frac{1}{2}c(\alpha) \mathbb{E}[Z^2] \right) \leq 1,
    \end{equation}
where $c(\alpha) = \dfrac{ ( 1+ 2\alpha)(1 + 3\alpha)}{(1+\alpha)^2}$.

\end{lemma}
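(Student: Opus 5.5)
The plan is to reduce the two-sided case to the one-sided inequality established in the proof of Lemma~\ref{bound-extremal}. In its equivalent form \eqref{to-prove-2}, that inequality reads $c(-1/\beta)\,G(\lambda)\le 2S(\lambda)(S(\lambda)+1)(S(\lambda)+2)$ for every $\lambda>0$. The two halves of a two-sided $\alpha$-affine distribution are, up to normalization, one-sided $\alpha$-affine sequences with parameters $\lambda_1$ and $\lambda_2$. So the quantities entering \eqref{equation-V} can be expressed through $S(\lambda_i)$ and $G(\lambda_i)$, and the only remaining step is to combine two instances of \eqref{to-prove-2}.

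\textbf{Setting up the quantities.} Write $\beta=-1/\alpha>3$, and for $i=1,2$ set $S_i=S(\lambda_i)=\sum_{k\ge1}(1+\lambda_i k)^{-\beta}$ and $G_i=G(\lambda_i)=\sum_{k\ge1}k^2(1+\lambda_i k)^{-\beta}$, with $S$ and $G$ as in the proof of Lemma~\ref{bound-extremal}. The probability mass function of $Z$ is $g(k)=C(1+\lambda_1 k)^{1/\alpha}$ for $k\ge0$ and $g(k)=C(1-\lambda_2 k)^{1/\alpha}$ for $k\le0$, and it has mode at $0$. Normalization gives $C=1/T$ with $T=1+S_1+S_2$, so $M(Z)=1/T$. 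The second moment splits over the two sides as $\E[Z^2]=(G_1+G_2)/T$. Substituting these, \eqref{equation-V} becomes
\[
\frac{1}{T^2}+\frac{c(\alpha)}{2}\,\frac{G_1+G_2}{T^3}\le 1
\quad\Longleftrightarrow\quad
c(\alpha)(G_1+G_2)\le 2(T^3-T)=2(S_1+S_2)(S_1+S_2+1)(S_1+S_2+2).
\]

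\textbf{Combining the one-sided bounds.} Let $p(s)=s(s+1)(s+2)=s^3+3s^2+2s$. By \eqref{to-prove-2}, which holds for every $\lambda>0$ and hence for both $\lambda_1$ and $\lambda_2$,
\[
c(\alpha)(G_1+G_2)\le 2p(S_1)+2p(S_2).
\]
It therefore suffices to show that $p(S_1)+p(S_2)\le p(S_1+S_2)$ for $S_1,S_2\ge0$. This superadditivity holds because $p$ is a polynomial with nonnegative coefficients and no constant term. Each monomial $s^j$ with $j\ge1$ satisfies $(a+b)^j\ge a^j+b^j$ for $a,b\ge0$, and summing these with the nonnegative coefficients of $p$ gives the claim.

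\textbf{Main difficulty.} There is no real obstacle beyond what Lemma~\ref{bound-extremal} already handles. The point that needs care is to invoke the form \eqref{to-prove-2}, stated in terms of $S=F-1$, rather than the normalized statement \eqref{alpha-affine}. The product $S(S+1)(S+2)$ is exactly what becomes superadditive once the two sides share the single atom at $0$. Applying \eqref{alpha-affine} to the normalized one-sided variables directly would not combine cleanly, because each side would then count the mass at $0$ separately.
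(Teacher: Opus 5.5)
Your proposal is correct and follows the paper's proof. Both arguments rewrite the inequality as $c(\alpha)(G_1+G_2)\le 2(T^3-T)$ with $T=1+S_1+S_2$, then sum the two one-sided instances of \eqref{to-prove-2}, and conclude by superadditivity. The paper phrases that last step as convexity of $t\mapsto t^3-t$ on $t\ge 0$, which implicitly uses that this function vanishes at $t=1$; your nonnegative-coefficients argument for $s(s+1)(s+2)$ is the same step written out explicitly.
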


\begin{proof}
Let $Z \sim g$ be a random variable with a two-sided discrete $\alpha$-affine distributions, so that
\begin{equation*}
  g(k) =
    \begin{cases}
      g(0)(1 + \lambda_1 k) & k \geq 0 \\
      g(0)(1 - \lambda_2 k) & k \leq 0
    \end{cases},
\end{equation*}
with fixed $\lambda_1,\lambda_2 > 0$. Let us recall the following notations from the proof of Lemma \ref{bound-extremal},
$$ S(\lambda) = \sum_{k \geq 1} (1+\lambda k)^{1/\alpha}, \qquad G(\lambda) = \sum_{k \geq 1} k^2 (1+\lambda k)^{1/\alpha}. $$
Note that
$$ g(0) = \frac{1}{1 + S(\lambda_1) + S(\lambda_2)}, $$
and
$$ \E[Z^2] = \frac{G(\lambda_1) + G(\lambda_2)}{1 + S(\lambda_1) + S(\lambda_2)}. $$
Therefore, inequality \eqref{equation-V} can be restated as
\begin{equation}\label{to-prove-V}
c(\alpha)(G(\lambda_1) + G(\lambda_2)) \leq 2 \left[(1 + S(\lambda_1) + S(\lambda_2))^3 - (1 + S(\lambda_1) + S(\lambda_2)) \right].
\end{equation}
Recall that we have proven inequality \eqref{to-prove-2}, which states that
$$ c(\alpha) G(\lambda) \leq 2 \left[(1+S(\lambda))^3 - (1+S(\lambda)) \right], \qquad \lambda=\lambda_1, \lambda_2. $$
Performing summation yields,
\begin{eqnarray*}
c(\alpha) (G(\lambda_1) + G(\lambda_2)) & \leq & 2 \left[(1+S(\lambda_1))^3 + (1+S(\lambda_2))^3 - (1+S(\lambda_1)) - (1+S(\lambda_2)) \right] \\ & \leq & 2 \left[(1+S(\lambda_1)+S(\lambda_2))^3 - (1+S(\lambda_1)+S(\lambda_2)) \right],
\end{eqnarray*}
where the last inequality follows by convexity of $t \mapsto t^3 - t$ ($t \geq 0$). Therefore we have established \eqref{to-prove-V}.
\end{proof}

We are now ready to prove Theorem \ref{maximum-vs-variance}.

\begin{proof}[Proof of Theorem \ref{maximum-vs-variance}]
By translation, we may assume that $X$ has a mode at $0$. Let $Z$ be a random variable with a two-sided $\alpha$-affine distributions such that $M(Z) = M(X)$. By Proposition \ref{sharp-Z} applied to the function $\Psi(x) = \sqrt{1 + (c(\alpha)/2) \, x}$, we obtain
$$ M(X)^2 \left( 1 + \frac{1}{2}c(\alpha) \mathbb{E}[X^2] \right) \leq M(Z)^2 \left( 1 + \frac{1}{2}c(\alpha) \mathbb{E}[Z^2] \right) \leq 1, $$
where the last inequality comes from Lemma \ref{bound-extremal-V}. The desired result follows from $\Var(X) \leq \E[X^2]$.
\end{proof}
\begin{remark}\label{sharpness}
    Contrary to the second-moment bound in Lemma \ref{bound-extremal}, the constant in the variance bound of Theorem \ref{maximum-vs-variance} is not optimal and can be improved. Indeed, for the same one-sided $\alpha$-affine family considered in Remark \ref{rem-sharp}, the same Riemann-sum argument gives, as $\lambda \to 0^+$,
    \end{remark}\noindent
$$ M(Z) \sim \lambda (\beta-1), \quad \mathbb{E}[Z] \sim \dfrac{1}{\lambda(\beta-2)}, \quad \mathbb{E}[Z^2] \sim \dfrac{2}{\lambda^2\,(\beta-2)(\beta-3)}. $$
Consequently
$$M(Z)^2 \Var(Z) \to  \dfrac{(\beta-1)^3}{(\beta-2)^2\, (\beta-3)}.$$
Therefore, if 
$$M(X)^2 \left( 1 + c^\prime(\alpha) \Var(X) \right) \leq 1$$
holds for any discrete $\alpha$-concave random variable $X$, then necessarily
$$c^\prime(\alpha) \leq \dfrac{(\beta-2)^2 (\beta-3)}{(\beta-1)^3} = \dfrac{(1+2\alpha)^2(1+3\alpha)}{(1+\alpha)^3},$$
with equality attained asymptotically for one-sided $\alpha$-affine distributions, suggesting the following conjecture.\vskip2mm

    \begin{conj} \label{conj_maximum-vs-variance}
        Let $\alpha \in (-1/3 , 0)$. If $X$ is a discrete $\alpha$-concave random variable on $\Z$, then
    \begin{equation*}
    M(X)^2 \left( 1 + k(\alpha)\Var(X) \right) \leq 1,\end{equation*}
where $k(\alpha) = \dfrac{ (1+ 2\alpha)^2(1+ 3\alpha)}{(1+\alpha)^3}$.
    \end{conj}
 
Note that, in the limit $\alpha \to 0^-$, Conjecture \ref{conj_maximum-vs-variance} recovers the corresponding entropy-variance inequalities in the log-concave setting; see \cite{Ara, JMNS, GMPP}.

\section{Concentration function}\label{Levy-concentration}

This section discusses the L\'evy concentration function of a discrete $\alpha$-concave distribution. Given a random variable $X$, its L\'evy concentration is defined as
$$ M(X ;\ell) = \sup_{x \in \R} \P(x \leq X \leq x + \ell), \quad \ell \geq 0. $$
We refer to \cite{MR, P, BC15} for further background on the concentration function. Note that $M(X;0)=M(X)$. It is known that for any discrete random variable $X$ and any integer $\ell \geq 0$ (see \cite[Proposition 8.3]{BMM}),
$$ M(X;\ell) \, \geq \, \frac{\ell + 1}{\sqrt{1 + \ell(\ell + 2) + 12\,\Var(X)}}. $$
The following upper bound can be obtained under $\alpha$-concavity.

\begin{proposition}

Let $\alpha \in (-1/3 , 0)$. Let $X$ be a discrete $\alpha$-concave random variable. Then, for any integer $\ell \geq 0$,
$$ M(X;\ell) \leq \frac{\ell + 1}{\sqrt{1 + \frac{\ell(\ell+2)}{2} + \frac{c(\alpha)}{4} \Var(X)}}, $$
where $c(\alpha) = \dfrac{ ( 1+ 2\alpha)(1 + 3\alpha)}{(1+\alpha)^2}$.

\end{proposition}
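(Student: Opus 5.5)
We reduce to Theorem \ref{maximum-vs-variance} through a block (coarse-graining) construction. Fix an integer $\ell \geq 0$ and set $n = \ell+1$. For each residue $j \in \{0,\dots,\ell\}$ we group the integers into consecutive blocks $B_q = \{qn+j, \dots, qn+j+\ell\}$, $q \in \Z$, and define the integer-valued random variable $Y_j = q$ on the event $X \in B_q$. Its mass function $p_j(q) = \sum_{i=0}^{\ell} f(qn+j+i)$ is a sum of $n$ shifted copies of $f$ along an arithmetic progression of step $n$. By construction $M(Y_j) \leq M(X;\ell)$, and for a suitable $j$ (the one aligning a block with the window achieving the supremum) we get $M(Y_j) = M(X;\ell)$.

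The key step is to show that $Y_j$ is again discrete $\alpha$-concave, or at least satisfies an inequality of the form of Theorem \ref{maximum-vs-variance}. By Proposition \ref{alpha_piecewise_construction}, $f$ extends to an $\alpha$-concave function on $\R$. The sequence $q \mapsto f(qn + c)$ is then $\alpha$-concave for each fixed $c$. For $\alpha \geq -1$, a sum of $\alpha$-concave sequences along a common "Minkowski structure" is still $\alpha$-concave by a Borell--Brascamp--Lieb type argument: the map $q \mapsto \P(X \in [qn+j, qn+j+\ell])$ is $\alpha$-concave, because it is the measure of a translated interval under an $\alpha$-concave measure on $\Z$ viewed through the extension. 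I expect this to be the main obstacle. What I would try first is to prove directly the pointwise midpoint inequality $p(q) \geq M_\alpha^{1/2}(p(q-1),p(q+1))$ using the linearity of $f^\alpha$ on unit segments and Minkowski's inequality for $M_\alpha$ (valid for $\alpha \le 1$, reversed direction). If this fails, the fallback is to apply the majorization route of Proposition \ref{sharp-Z} directly to $Y_j$: its mass function is unimodal and is obtained by comparison with block sums of a two-sided $\alpha$-affine $g$.

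Granting this, Theorem \ref{maximum-vs-variance} applied to $Y_j$ gives $M(X;\ell)^2\left(1+\tfrac{c(\alpha)}{2}\Var(Y_j)\right) \leq 1$. It remains to compare $\Var(Y_j)$ with $\Var(X)$. Write $X = nY_j + j + R_j$ with $R_j \in \{0,\dots,\ell\}$. Averaging over $j$ uniformly (equivalently, introducing an independent uniform shift $U$ on $\{0,\dots,\ell\}$) yields
$$n^2\,\E_U\Var(Y_U) \geq \tfrac{1}{2}\Var(X) - C\,\ell(\ell+2)$$
through the elementary bound $\Var(A+B) \leq 2\Var A + 2\Var B$, with $\Var(R) \leq \ell(\ell+2)/12$ for a variable of range $\ell$ handled similarly. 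Choosing $j$ maximizing $\Var(Y_j)$ and keeping $M(Y_j) \le M(X;\ell)$ loses the equality $M(Y_j)=M(X;\ell)$, though. I would therefore instead fix $j$ from the supremum and bound $\Var(X) \le 2n^2\Var(Y_j) + 2\Var(R_j)$ directly.

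Plugging this in produces
$$M(X;\ell)^2\left(1 + \tfrac{c(\alpha)}{4n^2}\Var(X) - \tfrac{c(\alpha)}{n^2}\Var(R_j)\right)\le 1.$$
Multiplying by $n^2=(\ell+1)^2$ and using $c(\alpha)\le 1$ and $\Var(R_j)\le \ell(\ell+2)/4$ should turn the combination $n^2 - c\Var(R_j)/\ldots$ into $1+\ell(\ell+2)/2$ after algebra, since $(\ell+1)^2 = 1+\ell(\ell+2)$. The stated constants $\tfrac12$ and $\tfrac{c(\alpha)}{4}$ strongly suggest exactly this factor-$2$ loss from the variance splitting. Taking square roots then gives the claim.
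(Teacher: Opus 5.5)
\textbf{The gap.} The step that fails is the one you flagged: for $\alpha<0$ the block variable $Y_j$ is in general not discrete $\alpha$-concave, even for the $j$ aligned with the optimal window.

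Concretely, take $X$ to be Zeta with $s=4$, so $\alpha=-1/4\in(-1/3,0)$ and the variance is finite. Take $\ell=1$ and the blocks $\{1,2\},\{3,4\},\{5,6\}$; this $j$ realizes $M(X;1)$. Up to the factor $1/\zeta(4)$, the block masses are $1+2^{-4}$, $3^{-4}+4^{-4}$ and $5^{-4}+6^{-4}$. Their $(-1/4)$-th powers are about $0.985$, $2.801$ and $4.531$. Since $2.801>\tfrac12(0.985+4.531)\approx 2.758$, \eqref{def-alpha} fails at the middle block.

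Neither of your proposed justifications could work:
\begin{itemize}
\item For $\alpha\le 1$ the mean $M_\alpha^{1/2}$ is concave and $1$-homogeneous, hence superadditive. Minkowski therefore gives $\sum_i M_\alpha^{1/2}(a_i,b_i)\le M_\alpha^{1/2}(\sum_i a_i,\sum_i b_i)$, which is the wrong direction for proving $p_j(q)\ge M_\alpha^{1/2}(p_j(q-1),p_j(q+1))$.
\item The continuous heuristic loses a dimension. By Borell's correspondence, an $\alpha$-concave density only makes window masses $t\mapsto\int_t^{t+L}\bar f$ $\frac{\alpha}{1+\alpha}$-concave. That parameter changes $c(\alpha)$ and, for $\alpha\le -1/4$, falls outside the range of Theorem \ref{maximum-vs-variance}.
\end{itemize}
Note also that $p_j(q)=(\ell+1)\P(X+U_\ell=qn+j+\ell)$. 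You are sampling the law of $X+U_\ell$, and its failure to be $\alpha$-concave is exactly what the paper warns about.

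The majorization fallback breaks for the same reason. The single-crossing step in Proposition \ref{sharp-Z} uses convexity of $p_j^{\alpha}$. Moreover, block sums of a two-sided $\alpha$-affine law are not $\alpha$-affine, so Lemma \ref{bound-extremal-V} would not apply to the comparison variable either. Your final variance bookkeeping is correct. What is unjustified is applying Theorem \ref{maximum-vs-variance} to $Y_j$.

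\textbf{The missing idea.} You do not need any structure of a coarse-grained variable. Use $M(X;\ell)=(\ell+1)M(X+U_\ell)$ together with the trivial bound $M(X+U_\ell)\le\min\{M(X),M(U_\ell)\}$, where $M(U_\ell)=1/(\ell+1)$. Equivalently, $M(X;\ell)\le\min\{(\ell+1)M(X),1\}$. Then combine three facts:
\begin{itemize}
\item $\min(a,b)\le\bigl(\frac{a^{-2}+b^{-2}}{2}\bigr)^{-1/2}$;
\item Theorem \ref{maximum-vs-variance} applied to $X$ itself, which gives $M(X)^{-2}\ge 1+\frac{c(\alpha)}{2}\Var(X)$;
\item $(\ell+1)^2=1+\ell(\ell+2)$.
\end{itemize}
These give the claim directly. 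The factor $\frac12$ in front of $\ell(\ell+2)$ and the constant $\frac{c(\alpha)}{4}$ come from this power-mean step, not from a variance splitting.
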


\begin{proof}
Applying \cite[Lemma 8.1]{BMM}, one has for $\ell \in \N$
\begin{equation}\label{Levy-function}
M(X;\ell) \, = \, (\ell + 1)\, M(X+U_\ell),
\end{equation}
where the random variable $U_\ell$ is independent of $X$ and has a discrete uniform distribution on the integer interval $\{0,1,\dots,\ell\}$. Contrary to the argument used in the log-concave setting in \cite[Section 8]{BMM}, the convolution of $U_{\ell}$ with a discrete $\alpha$-concave random variable is no longer $\alpha$-concave (when $\alpha < 0$), so one may not apply Theorem \ref{maximum-vs-variance} to $X+U_{\ell}$. Instead, we will make use of the inequality
\begin{equation}\label{trivial}
M(X+U_{\ell}) \leq \left[ \frac{M(X)^{-2} + M(U_{\ell})^{-2}}{2} \right]^{-\frac{1}{2}},
\end{equation}
which follows from the trivial bound $M(X+U_{\ell}) \leq \min(M(X), M(U_{\ell}))$ together with the fact that $\min(a,b) \leq \left(\frac{a^{-2}+b^{-2}}{2}\right)^{-1/2}$ for $a,b \geq 0$. Since $M(U_{\ell})^{-2} = (1+\ell)^2$, Theorem \ref{maximum-vs-variance} together with \eqref{trivial} leads to
$$ M(X+U_{\ell}) \leq \left[ \frac{1}{2} \left( 1 + \frac{c(\alpha)}{2} \Var(X) + 1 + \ell(\ell+2) \right) \right]^{-1/2}, $$
which yields the desired result using \eqref{Levy-function}.
\end{proof}

\section{Reverse Entropy Power Inequality}\label{Reverse-EPI}

Entropy power inequalities usually do not admit a reverse form without additional structural assumptions. Under suitable convexity assumptions, however, reverse entropy power inequalities are known to hold in both the continuous and discrete settings; see, for example, \cite{BM, BNT, BN, BMM, XMM}. 

Our argument follows the approach of \cite{BMM}, where the corresponding result for discrete log-concave random variables is obtained through a comparison between entropy and variance. Here, we combine the same general strategy with the maximum-variance inequality established in Theorem \ref{maximum-vs-variance}. We start by recalling the following proposition from \cite{BMM}.

\begin{proposition}{\cite[Proposition 3.3]{BMM}}\label{BMM_Proposition}
    Let $1<r \leq \infty$. If $X$ is an integer-valued random variable with finite variance, then
    $$1 \leq N_r(X) \leq 1+\dfrac{4(3r-1)}{r-1}\Var(X).$$
\end{proposition}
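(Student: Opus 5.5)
The lower bound $N_r(X)\geq 1$ is immediate. Since $0\leq f(k)\leq 1$ and $r>1$, we have $\sum_k f(k)^r\leq \sum_k f(k)=1$. The exponent $\frac{2}{1-r}$ is negative, so $N_r(X)=\big(\sum_k f(k)^r\big)^{2/(1-r)}\geq 1$. The case $r=\infty$ is read as $N_\infty(X)=M(X)^{-2}\geq 1$. For the upper bound, the plan is to bound $\sum_k f(k)^r$ from below through a Chebyshev-type localization of the mass.

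Let $\mu=\E[X]$ and $\sigma^2=\Var(X)$. Take an integer window $A_t=\{k\in\Z:|k-\mu|<t\}$, which has at most $2t+1$ points and satisfies $\P(X\in A_t)\geq 1-\sigma^2/t^2$ by Chebyshev. Applying Jensen (equivalently H\"older) on $A_t$ gives
$$\sum_k f(k)^r\geq \sum_{k\in A_t} f(k)^r\geq |A_t|^{1-r}\,\P(X\in A_t)^r .$$
Raising to the power $2/(1-r)<0$ then gives
$$N_r(X)\leq |A_t|^{2}\,\Big(1-\frac{\sigma^2}{t^2}\Big)^{-\frac{2r}{r-1}} ,$$
and the whole argument reduces to choosing $t$ well. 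I would split into two regimes of $\sigma^2$.

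\emph{Small variance.} Take $t=1/2$, so that $|A_t|\leq 1$. This gives $N_r(X)\leq (1-4\sigma^2)^{-2r/(r-1)}$, which is useful when $\sigma^2$ is below a fixed threshold such as $\sigma^2\leq 1/8$. Since $x\mapsto(1-x)^{-p}$ is convex, on $x\in[0,x_*]$ it lies below its chord, which gives a linear bound $1+C_p x$. Its slope at $0$ is $8r/(r-1)$ in terms of $\sigma^2$, which is strictly smaller than the target constant $\frac{4(3r-1)}{r-1}=\frac{12r-4}{r-1}$. The task is therefore to fix the threshold $x_*$ so that the chord slope stays below $4(3r-1)/(r-1)$ uniformly in $r>1$. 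This may require letting the threshold depend on $r$ or handling $r$ near $1$ separately.

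\emph{Large variance.} Take $t=\kappa\sigma$ with $\kappa>1$ chosen, possibly depending on $r$, to balance the two factors. This gives $N_r(X)\leq (2\kappa\sigma+1)^2(1-\kappa^{-2})^{-2r/(r-1)}$, and once $\sigma^2$ exceeds the threshold the right side is of order $\sigma^2$. One then checks that it is at most $1+\frac{4(3r-1)}{r-1}\sigma^2$. A natural choice is $\kappa^2$ comparable to $\frac{3r-1}{r-1}$, which keeps $(1-\kappa^{-2})^{-2r/(r-1)}$ bounded by an absolute constant (it is at most $e^{c}$, since $\frac{2r}{r-1}\cdot\frac{r-1}{3r-1}\leq 1$). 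The prefactor is then $4\kappa^2\sigma^2$ times a constant.

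The main obstacle is the constant bookkeeping. Matching the exact constant $4(3r-1)/(r-1)$ in both regimes, and making the two regimes meet, may require refining the window. One option is to center it at an integer nearest $\mu$ and count $|A_t|\leq 2\lfloor t\rfloor+1$ more carefully. If the crude Chebyshev route loses a factor, I would instead replace it by a bound on $M(X)$: any integer-valued $X$ satisfies $\sigma^2\geq$ a quantity of order $M(X)^{-2}-1$, obtained by comparing with the uniform distribution on $\lceil 1/M\rceil$ points. I would then combine this with the monotonicity $N_r\leq N_\infty^{r/(r-1)}$ only in the regime where it does not lose.
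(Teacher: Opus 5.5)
The paper does not prove this proposition; it quotes it from \cite[Proposition 3.3]{BMM}, so your argument has to stand on its own. Your lower bound $N_r(X)\ge 1$ is correct. The upper bound, however, has a genuine gap, and it is not a matter of constant bookkeeping: the Chebyshev--H\"older window estimate is intrinsically too weak for the constant $\frac{4(3r-1)}{r-1}$.

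Put $q=\frac{2r}{r-1}\in[2,\infty)$, so the target reads $1+4(1+q)\sigma^2$. Your estimate is $N_r(X)\le |A_t|^2(1-\sigma^2/t^2)^{-q}$ with $|A_t|\ge 2t-1$. For large $\sigma$ the optimal window is $t^2=(1+q)\sigma^2$, which is exactly your $\kappa^2=\frac{3r-1}{r-1}$, and it yields
\[
N_r(X)\lesssim 4(1+q)\Big(1+\frac1q\Big)^{q}\sigma^2, \qquad \Big(1+\frac1q\Big)^{q}\in\big[\tfrac94,e\big).
\]
So the claim that this ``is at most $1+\frac{4(3r-1)}{r-1}\sigma^2$'' is false for large $\sigma^2$, whatever $\kappa$ you choose.

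A clean way to see that no Chebyshev-type localization can work: at $r=\infty$ the statement is $M(X)^{-2}\le 1+12\Var(X)$. This is an equality for the uniform law on $n$ consecutive integers, for every $n$. Your method gives only about $\frac94 n^2$ for that law.

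The fallback fails too. The bound $N_r\le N_\infty^{r/(r-1)}\le (1+12\sigma^2)^{r/(r-1)}$ is superlinear in $\sigma^2$. Its slope at $\sigma^2=0$ is $\frac{12r}{r-1}=\frac{4(3r-1)}{r-1}+\frac{4}{r-1}$, so it exceeds the target already for small variance. Even in your small-variance regime, the chord argument with $t=1/2$ only covers $\sigma^2\lesssim (r-1)^2$: the slack in the slope is just $4$, while $(1-4\sigma^2)^{-q}$ has curvature of order $q^2$.

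What is missing is a lower bound on the variance that is sharp on uniform laws. Here is one such input. Any $n$ distinct integers have squared deviations from any point summing to at least $n(n^2-1)/12$. By the layer-cake formula this gives
\[
\Var(X)\ \ge\ \frac1{12}\Big(\int_0^\infty \nu(s)^3\,ds-1\Big), \qquad \nu(s)=\#\{k\in\Z: f(k)>s\},
\]
using $\int_0^\infty\nu(s)\,ds=\sum_k f(k)=1$. At $r=\infty$, Jensen's inequality on $[0,M(X)]$ gives $\int_0^\infty\nu^3\ge M(X)^{-2}$, which is exactly the sharp bound $M(X)^{-2}\le 1+12\Var(X)$. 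For finite $r$ this has to be compared with $\sum_k f(k)^r=\int_0^\infty r s^{r-1}\nu(s)\,ds$, together with a separate, more careful treatment of small variance. An argument of this rearrangement type, rather than Chebyshev, is what the stated constant requires.
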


\begin{proof}[Proof of Theorem \ref{Reverse_EPI}]
    By Proposition \ref{BMM_Proposition}, one has for arbitrary integer-valued random variable $X$,
    $$ \Delta_r(X) \leq \frac{4(3r - 1)}{r-1} \Var(X). $$
    On the other hand, by Theorem \ref{maximum-vs-variance}, if $X$ is $\alpha$-concave,
    $$ \frac{c(\alpha)}{2} \Var(X) \leq \Delta_{\infty}(X) \leq \Delta_r(X), $$
    where the last inequality comes from monotonicity of $r \mapsto \Delta_r(X)$. Therefore,
    \begin{eqnarray*}
    \Delta_r \left( \sum_{i=1}^n X_i \right) \leq \frac{4(3r - 1)}{r-1} \Var \left( \sum_{i=1}^n X_i \right) & = & \frac{4(3r - 1)}{r-1} \sum_{i=1}^n \Var \left( X_i \right) \\ & \leq & \frac{4(3r - 1)}{r-1} \frac{2}{c(\alpha)} \sum_{i=1}^n \Delta_r(X_i).
    \end{eqnarray*}
\end{proof}

\section{Concentration inequalities}\label{concentration}

In this section, we derive concentration inequalities for discrete $\alpha$-concave random variables. Our argument is based on the single-crossing comparison with discrete $\alpha$-affine distributions used in the proof of Proposition \ref{sharp-Z}. In particular, the same comparison that yields the moment estimates of section \ref{max-var} also provides a stochastic domination of the tails by those of suitable one-sided $\alpha$-affine distributions. 

First we need the following estimate for the tails of a discrete $\alpha$-affine distributions.
\begin{lemma}\label{tec_lemma}
    Let $\beta>1$ and $\lambda>0$. Then for all $t > 0$,
    
    \begin{equation} \label{tail_bound}
        \dfrac{\displaystyle \sum_{k= \lceil t \rceil}^\infty (1+ \lambda k)^{-\beta}}{\displaystyle \sum_{k=0}^\infty (1+ \lambda k)^{-\beta}} \leq (1+ \lambda \lceil t \rceil)^{1-\beta}.
    \end{equation}
\end{lemma}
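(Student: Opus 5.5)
Write $n=\lceil t\rceil\geq 1$ and $F(\mu)=\sum_{j\geq 0}(1+\mu j)^{-\beta}$, as in the proof of Lemma \ref{bound-extremal}. The plan is to reduce \eqref{tail_bound} to a monotonicity statement about $F$, and then prove that monotonicity with the Gamma integral representation \eqref{Gamma-rep} used earlier.

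\emph{Step 1 (shift the tail).} Reindex the numerator by $k=n+j$ and use the factorization
$$1+\lambda(n+j)=(1+\lambda n)\Bigl(1+\tfrac{\lambda}{1+\lambda n}\,j\Bigr).$$
Setting $s=1+\lambda n\geq 1$ and $\mu=\lambda/s$, this gives
$$\sum_{k\geq n}(1+\lambda k)^{-\beta}=s^{-\beta}F(\mu).$$
Since the denominator is $F(\lambda)=F(s\mu)$, inequality \eqref{tail_bound} becomes $s^{-\beta}F(\mu)\leq s^{1-\beta}F(s\mu)$, that is,
$$F(\mu)\leq sF(s\mu).$$
Multiplying both sides by $\mu>0$, this is equivalent to $\phi(\mu)\leq\phi(s\mu)$, where $\phi(\mu):=\mu F(\mu)$. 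Because $s\geq 1$, it therefore suffices to show that $\phi$ is nondecreasing on $(0,\infty)$.

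\emph{Step 2 (monotonicity of $\phi$).} This is the only real obstacle. Term-by-term differentiation does not work: the derivative of $\mu(1+\mu j)^{-\beta}$ is $(1+\mu j)^{-\beta-1}\bigl(1-(\beta-1)\mu j\bigr)$, which is negative for large $j$. Instead, apply \eqref{Gamma-rep} to each term, which is legitimate since $\beta>0$. Tonelli's theorem and summing the geometric series then give
$$\phi(\mu)=\frac{1}{\Gamma(\beta)}\int_0^\infty x^{\beta-1}e^{-x}\,\frac{\mu}{1-e^{-\mu x}}\,dx
=\frac{1}{\Gamma(\beta)}\int_0^\infty x^{\beta-2}e^{-x}\,\psi(\mu x)\,dx,$$
where $\psi(u)=u/(1-e^{-u})$. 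The integral converges because $\beta>1$ and $\psi(u)\leq 1+u$.

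It remains to note that $\psi$ is nondecreasing on $(0,\infty)$. Indeed, $\psi'(u)$ has the sign of $1-e^{-u}-ue^{-u}=e^{-u}(e^u-1-u)\geq 0$. Hence for each fixed $x>0$ the integrand is nondecreasing in $\mu$, and so $\phi$ is nondecreasing. Combining Steps 1 and 2 yields \eqref{tail_bound} for every $t>0$.
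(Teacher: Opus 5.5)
Your proof is correct and follows the paper's argument essentially step for step. It uses the same shift-and-rescale identity to reduce \eqref{tail_bound} to monotonicity of $\mu\mapsto\mu F(\mu)$, and proves that monotonicity through the Gamma integral representation and the monotonicity of $u\mapsto u/(1-e^{-u})$. Writing the integrand as $x^{\beta-2}e^{-x}\psi(\mu x)$ is a slight tidying: pointwise monotonicity in $\mu$ avoids differentiating under the integral sign, and it makes the role of $\beta>1$ in convergence explicit.
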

\begin{proof}
Note that
\begin{equation}\label{F-b}
    \sum_{k=\lceil t \rceil}^\infty (1+ \lambda k)^{-\beta} = \sum_{k=0}^\infty (1+ \lambda \lceil t \rceil + \lambda k)^{-\beta} = (1+ \lambda \lceil t \rceil )^{-\beta} F_{\beta}\left(\frac{\lambda}{1+\lambda \lceil t \rceil}\right),
\end{equation}
where $F_\beta (x) = \displaystyle \sum_{k=0}^\infty (1+xk)^{-\beta}$. We claim that $\lambda \mapsto \lambda F_\beta (\lambda)$ is non-decreasing. Indeed, by the Gamma integral representation  (see \eqref{Gamma-rep}) and Tonelli's theorem, write
$$ \lambda F_\beta (\lambda) = \dfrac{1}{\Gamma(\beta)} \int_0^\infty x^{\beta-1}e^{-x}\frac{\lambda}{1-e^{-\lambda x}}\,dx. $$
For every $x\geq 0$,
$$ \dfrac{d}{d\lambda} \left[ \dfrac{\lambda}{1-e^{-\lambda x}}\right] = \dfrac{1-e^{-\lambda x}(1+ \lambda x)}{(1-e^{-\lambda x})^2} \geq 0, $$
since $e^{\lambda x} \geq 1+ \lambda x$. Therefore $\dfrac{d}{d\lambda}(\lambda F_\beta(\lambda))\geq 0$. Consequently, we have
$$ \dfrac{\lambda}{1+\lambda \lceil t \rceil}F_\beta \left(\frac{\lambda}{1+\lambda \lceil t \rceil}\right) \leq \lambda F_\beta(\lambda), $$
which is the same as
$$ F_\beta \left(\frac{\lambda}{1+\lambda \lceil t \rceil}\right)  \leq (1+\lambda \lceil t \rceil)\, \sum_{k=0}^\infty (1+ \lambda k)^{-\beta}. $$
Substituting this into \eqref{F-b} proves \eqref{tail_bound}.
\end{proof}

Let us now prove Theorem \ref{concentration_bound}.

\begin{proof}[Proof of Theorem \ref{concentration_bound}]
By a translation argument, let us assume that $m=0$. Let 
$$ q_+ = \mathbb{P}(X \geq 0) \quad \text{and} \quad q_- = \mathbb{P}(X \leq 0). $$
By the construction used in the proof of proposition \ref{sharp-Z}, there exists $\lambda_{+}>0$ such that the one-sided $\alpha$-affine sequence
$$ g_+(k) = M(X)(1+ \lambda_{+}\,k)^{-\beta}, \quad k=0,1,2,\dots $$ satisfies
$$ \sum_{k=0}^\infty g_+(k) = \sum_{k=0}^\infty f(k) = q_{+}. $$
Thus 
\begin{equation}\label{q+}
q_+ = M(X)F_\beta(\lambda_+).
\end{equation}
Moreover, the single-crossing argument used in the proof of Proposition \ref{sharp-Z} shows that $f-g_+$ is non-negative before the crossing and non-positive afterward. Since $f$ and $g_+$ have the same mass on $\mathbb{N}$, it follows that
$$ \sum_{k=n}^\infty f(k) \leq \sum_{k=n}^\infty g_+(k), \quad \text{for all $n \in \mathbb{N}$.} $$
Thus for all $t>0$,
\begin{equation}\label{large-dev}
\mathbb{P}(X \geq t) = \mathbb{P}(X \geq \lceil t \rceil) \leq M(X) \sum_{k=\lceil t \rceil}^\infty (1+ \lambda_{+}\,k)^{-\beta}.
\end{equation}
Combining \eqref{q+} and \eqref{large-dev} together with Lemma \ref{tec_lemma} yields,
\begin{equation}\label{large-dev2}
\mathbb{P}(X \geq t) \leq q_{+}\,(1+ \lambda_{+}\lceil t \rceil)^{1-\beta}.\end{equation}
It remains to eliminate $\lambda_{+}$. Since $x \mapsto (1+\lambda_+ x)^{-\beta}$ is nonincreasing,
$$F_\beta(\lambda_+) = \sum_{k=0}^\infty (1+\lambda_{+}k)^{-\beta} \geq \int_0^\infty (1+\lambda_{+}x)^{-\beta}\,dx = \dfrac{1}{\lambda_+(\beta-1)}. $$
Together with \eqref{q+}, this yields
\begin{equation}\label{l+}
\lambda_{+} \geq \dfrac{M(X)}{q_+ (\beta-1)}.
\end{equation}
Since $1-\beta<0$ and $\lceil t \rceil \geq t$, equations \eqref{large-dev2}-\eqref{l+} give
\begin{equation}\label{large-dev3}
\mathbb{P}(X \geq t) \leq  q_{+}\left(1+ \dfrac{M(X)\,t}{q_+ (\beta-1)}\right)^{1-\beta}.  \end{equation}
Applying the above positive half-line argument to the reflected random variable $Y=-X$ , which is again discrete $\alpha$-concave with mode at 0, yields
\begin{equation}\label{large-dev4}
\mathbb{P}(Y \geq t)= \mathbb{P}(X \leq -t) \leq q_{-}\,\left(1+ \dfrac{M(X)\,t}{q_- (\beta-1)}\right)^{1-\beta}.\end{equation}
Combining \eqref{large-dev3} and \eqref{large-dev4}, we have for all $t>0$
\begin{equation}\label{large-dev5}
    \mathbb{P}(|X| \geq t) \leq  q_{+}\left(1+ \dfrac{M(X)\,t}{q_+ (\beta-1)}\right)^{1-\beta} + q_{-}\left(1+ \dfrac{M(X)\,t}{q_- (\beta-1)}\right)^{1-\beta}.
\end{equation}
It remains to maximize \eqref{large-dev5} over $q_{-}$ and $q_{+}$. To this end, note that
$$ q_{+} + q_{-} = 1 + M(X) $$
and $M(X) \leq q_{\pm}\leq 1$. Put $r = \beta -1$ and define
$$ \Phi(q) = q\left(1+ \frac{M(X)\,t}{qr}\right)^{-r}, $$
so that the right-hand side of \eqref{large-dev5} can be rewritten as $\Phi(q_+) + \Phi(q_-)$. Differentiating the function $\Phi$ twice with respect to $q$ yields
$$ \Phi''(q) = \dfrac{r(r+1)\left(\frac{M(X)\,t}{r}\right)^2\, q^{r-1}}{\left(q+ \frac{M(X)\,t}{r}\right)^{r+2}}, $$
which is nonnegative. Therefore, $\Phi$ is convex. As a result, the function
$$ H(q) = \Phi(q) + \Phi(1+M(X)-q) $$
is convex on $[M(X),1]$, and thus the maximum of $H$ occurs at an end-point. But since $H(M(X)) = H(1)$, we have
$$ H(q)\leq H(1) = \left(1+ \dfrac{M(X)\,t}{\beta-1}\right)^{1-\beta} + M(X)\left(1+ \dfrac{t}{\beta-1}\right)^{1-\beta}, $$
which proves the main result.
\end{proof}

\section*{Acknowledgments}

The first author would like to extend his appreciation to the Deanship of Scientific Research at King Saud University for funding this work through Waed Program (W25-76).

\section*{Tool and computational resource disclosure}

The authors used OpenAi ChatGPT GPT-5.6 Luna model as an auxiliary tool in deriving certain estimates used in the proof of Lemma \ref{bound-extremal}.


\begin{thebibliography}{50}


\bibitem{Ara} H. Aravinda. Entropy-variance inequalities for discrete log-concave random variables via degree of freedom. Discrete Mathematics 347.1 (2024): 113683.

\bibitem{BNT} K. Ball, P. Nayar, T. Tkocz. A reverse entropy power inequality for log-concave random vectors. Studia Math. 235 (2016), no. 1, 17--30.

\bibitem{BN} M. Bia\l obrzeski, P. Nayar. R\'enyi entropy and variance comparison for symmetric log-concave random variables. IEEE Transactions on Information Theory 69.6 (2023): 3431--3438.

\bibitem{Bobkov} S. G. Bobkov. Large deviations and isoperimetry over convex probability measures with heavy tails. Electron. J. Probab. (2007), 1072--1100.

\bibitem{BC15} S. G. Bobkov, G. P. Chistyakov. On concentration functions of random variables. J. Theor.
Probab. 28 (2015), no. 3, 976--988.

\bibitem{BM} S. Bobkov, and M. Madiman, Reverse Brunn-Minkowski and reverse entropy power inequalities for convex measures. Journal of Functional Analysis, 262.7 (2012): 3309-3339.

\bibitem{BMM} S. G. Bobkov, A. Marsiglietti, J. Melbourne. Concentration functions and entropy bounds for discrete log-concave distributions. Combin. Probab. Comput. 31 (2022), no. 1, 54--72.

\bibitem{B1} C. Borell. Convex measures on locally convex spaces, Ark. Math. 12 (1974), 239--252.

\bibitem{B2} C. Borell. Convex set functions in d-space. Period. Math. Hungar., 6 (1975), 111--136. 

\bibitem{BLM} S. Boucheron, G. Lugosi, and P. Massart. Concentration Inequalities: A Nonasymptotic
Theory of Independence. Oxford University Press, 2013.

\bibitem{FGP} M. Fradelizi, O. Guédon and A. Pajor. Thin-shell concentration for convex measures. Studia Mathematica (2014), 123--148.

\bibitem{GMPP} T. Gaxiola, J. Melbourne, V. Pigno, E. Pollard. Anti-concentration inequalities for log-concave variables on the real line. Preprint. arXiv:2505.05793.

\bibitem{HAT} S. Haghighatshoar, E. Abbe, I. E. Telatar. A new entropy power inequality for integer-valued random variables. IEEE Trans. Inform. Theory 60 (2014), no. 7, 3787--3796.

\bibitem{HV} P. Harremo\"es, V. Vignat. An entropy power inequality for the binomial family. J. Inequal. Pure
Appl. Math., vol. 4, no. 5, pp. 1--15, 2003.

\bibitem{JMNS}  J. Jakimiuk, D. Murawski, P. Nayar, S. S\l obodianiuk. Log-concavity and discrete degrees of freedom. Discrete Math. 347(6), 114020 (2024).

\bibitem{JY} O. Johnson, Y. Yu. Monotonicity, thinning, and discrete versions of the entropy power inequality. IEEE Trans. Inform. Theory 56 (2010), no. 11, 5387--5395.

\bibitem{KN} S. Karlin and A. Novikoff. Generalized convex inequalities. Pacific J. Math., 13(4):1251--1279, 1963.

\bibitem{K} E. Kowalski. An introduction to probabilistic number theory. Cambridge Studies in Advanced Mathematics, Series Number 192, 2021.

\bibitem{L} W. Li. Random texts exhibit Zipf's-law-like word frequency distribution. IEEE Transactions on information theory 38 (1992), no. 6, 1842--1845.

\bibitem{LH} G. D. Lin, C-Y. Hu. The Riemann zeta distribution. Bernoulli 7 (2001), no. 5, 817--828.

\bibitem{MMR} M. Madiman, J. Melbourne, C. Roberto. Bernoulli sums and R\'enyi entropy inequalities. Bernoulli 29 (2), 1578--1599.

\bibitem{MM22} A. Marsiglietti, J. Melbourne. Concentration inequalities for log-concave sequences. International Mathematics Research Notices, Volume 2026, Issue 4, February 2026, rnag023.

\bibitem{MM26} A. Marsiglietti, J. Melbourne. A note on the Littlewood-Offord problem for discrete log-concave distributions. Preprint, arXiv:2510.25869 [math.PR].

\bibitem{MR} A. L. Miroshnikov, B. A. Rogozin. Inequalities for concentration functions. Teor. Veroyatnost. i Primenen (Russian) 25(1), 178–183 (1980).

\bibitem{Mit} D. S. Mitrinovi\'c. Analytic Inequalities. Springer, New York, NY, USA, 1970.

\bibitem{N} T. Nakamura. A complete Riemann zeta distribution and the Riemann hypothesis. Bernoulli 21(1): 604--617, 2015.

\bibitem{PPS} A. Pachon, F. Polito, L. Sacerdote. Random Graphs Associated to Some Discrete and Continuous Time Preferential Attachment Models. Journal of Statistical Physics, 162(6): 1608--1638, 2016.

\bibitem{P} V. V. Petrov. Limit theorems of probability theory. Sequences of independent random variables. Oxford
Studies in Probability, 4. Oxford Science Publications. The Clarendon Press, Oxford University Press, New York (1995) xii+292 pp.

\bibitem{S} H. A. Simon. On a class of skew distribution functions. Biometrika (1955), Volume 42, Issue 3-4, 425--440.

\bibitem{T} G. Tenenbaum. Introduction to analytic and probabilistic number theory, Cambridge Studies in Advanced Mathematics, Vol. 46, Cambridge University Press, 1995.

\bibitem{V} R. Vershynin. High-Dimensional Probability: An Introduction with Applications in Data Science. 2nd ed. Cambridge University Press; 2026.

\bibitem{W} W. Whitt. Uniform conditional variability ordering of probability distributions. Journal of Applied Probability, 22(3):619--633, 1985.

\bibitem{WM} J. O. Woo, M. Madiman. A discrete entropy power inequality for uniform distributions. Proc. IEEE Intl. Symp. Inform. Theory, Hong Kong, China, 2015.

\bibitem{XMM} P. Xu, J. Melbourne, M. Madiman. Reverse entropy power inequalities for s-concave densities. IEEE International Symposium on Information Theory (ISIT). IEEE, 2016.


\end{thebibliography}
\end{document}